\newtheorem{theorem}{Theorem}
\newtheorem{definition}{Definition}
\newcommand\munderbar[1]{%
  \underaccent{\bar}{#1}}
\colorlet{texcscolor}{blue!50!black}
\colorlet{texemcolor}{red!70!black}
\colorlet{texpreamble}{red!70!black}
\colorlet{codebackground}{black!25!white!25}
\providecommand{\keywords}[1]{\textbf{\textbf{Keywords: }} #1}
\newcommand{\dint}{\textnormal{d}}
\patchcmd\newpage{\vfil}{}{}{}
\title{Non-deterministic inference using random set models: \\  theory, approximation, and sampling method} 
\author{Truong-Vinh Hoang} 
\author{Hermann G. Matthies}
\affil{Institute of Scientific Computing, Technische Universit\"at Braunschweig, M\"uhlenpfordtstrasse 23, 38106 Braunschweig, Germany \\
hoang.tr.vinh@gmail.com, truong-vinh.hoang@tu-bs.de, wire@tu-bs.de}
\begin{document}
\maketitle

\begin{abstract}
  A random set is a generalisation of a random variable, i.e. a set-valued random variable.
  The random set theory allows a unification of other uncertainty descriptions such as interval variable, mass belief function in Dempster-Shafer theory of evidence, possibility theory, and set of probability distributions. 
  The aim of this work is to develop a non-deterministic inference framework, including theory, approximation and sampling method, that deals with the inverse problems in which uncertainty is represented using random sets. 
  The proposed inference method yields the posterior random set based on the intersection of the prior and the measurement induced random sets. 
  That inference method is an extension of Dempster's rule of combination, and a generalisation of Bayesian inference as well.  
  A direct evaluation of the posterior random set might be impractical.
  We approximate the posterior random set by a random discrete set whose domain is the set of samples generated using a proposed probability distribution.
  We use the capacity transform density function of the posterior random set for this proposed distribution.
  This function has a special property: it is the posterior density function yielded by Bayesian inference of the capacity transform density function of the prior random set. The samples of such proposed probability distribution can be directly obtained using the methods developed in the Bayesian inference framework.
  With this approximation method, the evaluation of the posterior random set becomes tractable.
\end{abstract}
\keywords{random set, inverse problem, evidence theory, probability box, combination rule}


\section{Introduction}
The inverse problem deals with the identification of the parameters in a computational model given some measurement data. 
It is typical that these parameters are not directly measured but rather their relating quantities which are observable. 
One can make a certain prediction about the parameters and evaluate the observable quantities. 
The difference between the predicted and the actual values of the observable quantities is a measure to evaluate how good a prediction is. 
There are two common methods to solve the inverse problem: deterministic and non-deterministic inferences. 
A deterministic inference targets the prediction that minimizes the different between the predicted and the measured values of the observable quantities. In a non-deterministic inference, the uncertainty -- state of limited knowledge -- about the parameters is updated based on the measurement data. 
For example, in Bayesian inference, the uncertainty is modelled and updated using random variables. 
A requirement to apply Bayesian inference is to formulate the prior uncertainty and the measurement error uncertainty with probability distributions \cite{jaynes2003, stuart2010inverse, Tarantola2005}.
However, in some situations, it could be difficult to derive a probability distribution that can express all the facets of a state of uncertainty. For example, prior knowledge can include nonspecificity, conflict, confusion, vagueness, biases, varying reliability levels of sources, and other types; and measurement data can contain noisy errors and also be coarsened \cite{Ferson2003,ferson2015constructing,beer2013imprecise}.

Several methods were developed to model the uncertainty for different situations, for example: random variable ({rv}), set of possible values e.g. an interval set \cite{moens2004interval}, set of probability distributions (probability box) \cite{kolmogoroff1941confidence, ferson2015constructing, baudrit2006practical}, mass belief function in the Dempster-Shafer (DS) theory of evidence \cite{Dempster1967, shafer1976mathematical}, and random set ({rs}) \cite{matheron1975random, molchanov2005theory}.
Each of these methods have their own advantages in the interpretation the uncertainty. 
In this paper, we focus on the {rs} theory. A {rs} is a set-valued {rv}, i.e. a map from the elementary probability space to the subsets of some domain. The first systematic treatment of the random (closed) set is Matheron \cite{matheron1975random}. The theory was then developed much further by Molchanov in \cite{molchanov2005theory} and by Nguyen in \cite{nguyen2006introduction}. The random set theory is a generalisation of the other listed uncertainty descriptions. Indeed, it is obvious that {rv} is a special case of {rs}. In the cases that the map of a {rs} points to a deterministic set, it becomes the set of possible values. In the context of the evidence theory, the mass belief function can be formulated via a map from a {rv} to subsets of some space. The mass belief function is hence a {rs}. Lastly, the probability box can be represented using a {rs} resulted from the union of the inversion of distribution functions belonging to that box. Thanks to that generality, the {rs} is flexible in modelling different descriptions of uncertainty, while the mathematical formulation remains unchanged.

In this paper, we develop a non-deterministic inference framework in which random sets are used to model uncertainty. Since {rs} theory can formulate all the other listed uncertainty descriptions, such inference framework can be applied for these cases as well as their combinations. The proposed inference is described in short as: the posterior {rs} is the intersection between the two input random sets: the prior {rs} and the {rs} induced by the measurement data. We shall show later in the paper that the proposed inference is a generalisation of Bayesian inference, i.e. when the prior {rs} is simplified to be a {rv}, the posterior {rs} is the posterior {rv} yielded by Bayesian inference. While in the cases that the two input random sets are expressed using evidence theory, the proposed inference method is identical to the Dempster's rule of combination \cite{Dempster1967}. Furthermore, if the input random sets are deterministic sets, the proposed inference yields the intersection of these sets as expected. 

Although the proposed inference rule is quite simple, the computation of its posterior {rs} is problematic. Likewise the Bayesian inference, for a complex computational model, a sampling method is applied to characterize the posterior {rs}. A direct method to determine the set-valued samples, i.e. to solve optimization problems in order to identify each set-valued sample, might be unpractical. In this paper, the posterior {rs} is approximated using a random discrete set whose domain is the set of samples of a proposed distribution. Once the samples of that distribution together with their computational model responses are available, the set-valued samples of the random discrete set are easily identified, and no optimization process is required. Given the set-value samples of the random discrete set, the characteristics of the posterior random sets, e.g. its distribution function and its set-valued expectation, can be estimated.

The choice of the proposed distribution is crucial in order to achieve a good approximation while the computation efficiency remains acceptable, e.g. being comparable with the sampling methods in the framework of Bayesian inference. 
In this work, the capacity transform density function of the posterior {rs}, denoted as $\pi_{T}^a$, is presented and used as the proposed probability density function (pdf). Its has a nice property: the pdf $\pi_{T}^a$ is exactly the posterior pdf yielded by Bayesian inference that updates the capacity transform pdf of the prior {rs}. In other words, we do not need to compute the posterior {rs} in advance, and then evaluate its capacity transform pdf $\pi_{T}^a$. Inversely, one can sample the pdf $\pi_{T}^a$ directly using Bayesian inference, and use the obtained samples to approximate the posterior {rs}. 
The method developed in the framework of Bayesian inference, e.g. Markov chain Monte Carlo (MCMC) \cite{tierney1994markov, gilks1995markov}, Kalman filter \cite{evensen2009data}, inversion via conditional expectation \cite{rosic2013parameter, matthies2016parameter}, can be directly applied to sample  the pdf $\pi_{T}^a$. Furthermore, since $\pi_{T}^a$ is a characteristic of the posterior {rs}, the required number of evaluations of the computational model is smaller than when using other non-informative proposed pdfs while producing a same level of approximation. 

The rest of the paper is organized as follows: In Section \ref{Sec:Backgroud}, the background of {rs} theory is summarized. 
The relations of {rs} theory to the evidence theory (together with possibility theory), and to the probability box are also discussed.
In Section \ref{Sec:inversion_random_set}, the inference method dealing with random sets is given. We shall show that the proposed inference method agrees with the Bayesian one when the prior {rs} is simplified to be a {rv}. In Section \ref{Sec:approximation_of_posterior_randomset}, the approximation of the posterior {rs} using a random discrete set is discussed.  
In Section~\ref{Sec:capacity_pdf}, the capacity transform pdf of the posterior {rs} is defined and is chosen as the proposed pdf. The sampling method of the discrete {rs} that approximates the posterior {rs} is then developed. 
In Section~\ref{Sec:selection_expectation}, the method to estimate the set-valued expectation of the posterior {rs} is given. The developed methods are illustrated through a numerical example in Section~\ref{Sec:numerical_example}. The paper is concluded in Section~\ref{Sec:conclusion}.
\if0
\section*{System of notation (this will be deleted)}
$x$  a vector \\
$x(\omega)$  {rv} \\
${X}(\omega)$  (prior) {rs} \\
${X}^a(\omega)$  posterior {rs} \\
${X}^d(\omega)$  measurement data induced {rs} \\
$\hat{X}(\omega)$   random discrete set \\
$\mathcal{X}$   a set \\
\fi
\section{Background of the {random set} theory} \label{Sec:Backgroud}
In this section, the background of the {rs} theory is summarized. Details can be found in e.g. \cite{molchanov2005theory}. Because the family of sets is rather rich, it is common to consider random \textit{closed} sets which include the case of random singletons.

\subsection{Random sets} 
Let $(\varOmega, \mathfrak{A}, \mathbb{P})$ be a complete probability space, where the set of elementary events is $\varOmega$, $\mathfrak{A}$ is $\sigma$~-algebra of events, and $\mathbb{P}$ is probability measure. 
The set of closed subsets of $\mathbb{X}\subset \mathbb{R}^n$ is denoted by $\mathfrak{X}$. 
A {rs} ${X}$ is defined as a set-valued measurable map given as
 \begin{equation}
 {X} : \varOmega  \rightarrow \mathfrak{X}.
 \end{equation}
For the sake of simplification, we consider in this paper only integrally bounded random sets, i.e. $\mathbb{E}(\sup_{x\in {X}(\omega)} ||x||)$ is bounded, where $\mathbb{E}$ is the expectation operator. 
When the set ${X}(\omega)$ is singleton set, i.e. it has only one element for all $\omega \in \varOmega$, the {rs} ${X}$ is a (vector valued) {rv}. 
The {rs} is hence considered as a generalisation of {rv}. 
Two measures, the {rs} distribution (RSD) $P_{{X}}$ and capacity functional $T_{{X}}$ of a {rs} ${X}$, are  defined as $P_{{X}}, \; T_{{X}}: \mathfrak{F} \rightarrow [0,1]$ where $\mathfrak{F}$ is $\sigma$-algebra of the set $\mathbb{X}$ such that
\begin{equation}
	P_{{X}}(\mathcal{X}) = \mathbb{P}(\{\omega \;|\; {X}(\omega)\subset \mathcal{X}\}) , 
\end{equation}
and
\begin{equation}
 T_{{X}}(\mathcal{X})= \mathbb{P}(\{\omega \;|\; {X}(\omega) \cap \mathcal{X} \neq \emptyset \}), 
\end{equation}
where $\mathcal{X} \in \mathfrak{F}$ is a measurable set. 
One can directly obtain that 
\begin{equation}
0\leq  P_{{X}} (\mathcal{X}) \leq T_{{X}} (\mathcal{X}) \leq 1.
\end{equation}
Functional $T_{{X}}$ is sub-additive, while $P_{{X}}$ is super-additive,  i.e.
\begin{equation}
\begin{split}
 T_{{X}}(\mathcal{X}_1\cup \mathcal{X}_2) &\leq T_{{X}}(\mathcal{X}_1) + T_{{X}}(\mathcal{X}_2), \quad \\
 P_{{X}}(\mathcal{X}_1\cup \mathcal{X}_2) &\geq P_{{X}}(\mathcal{X}_1) + P_{{X}}(\mathcal{X}_2), 
 \end{split}
\end{equation}
where $\mathcal{X}_1, \mathcal{X}_2 \in \mathfrak{F}$ such that $\mathcal{X}_1\cap \mathcal{X}_2 = \emptyset$.
It is remarked that a probability distribution function is additive. When $T_{{X}} $ and $ P_{{X}}$ are identical, they are a probability distribution function.

A {rv} $x(\omega)$ is a selection {rv} of the rs ${X}(\omega)$ is if $x(\omega) \in {X}(\omega)$ almost surely.  The probability distribution $P$ of a selection {rv} $x(\omega)$ satisfies 
\begin{equation}
	T_{{X}} (\mathcal{X}) \geq P (\mathcal{X}) \geq P_{{X}} (\mathcal{X}).
\end{equation}
The set of all selection random variables is denoted as $\mathcal{S}$. 
Since the {rs} ${X}$ is assumed to be integrally bounded, all the selection random variables of the {rs} ${X}$ are first order rv. The selection expectation $\mathbb{E}_S({X})$ of {rs} ${X}$ is the closure of the set of all expectations of integrable selection random variables, i.e.
\begin{equation}
\mathbb{E}_S({X}) = \text{cl} \{\mathbb{E}(x(\omega))\;|\; x(\omega)\in \mathcal{S}  \}
\label{Eq:slection_expectation}
\end{equation}
where $\mathbb{E}(x(\omega))$ is the expectation of the selection {rv} $x(\omega)$, and $\text{cl}$ is the closure operator. 

In the rest of this section, the relations of the {rs} to the theory of evidence (together with possibility theory), and to the probability box are discussed. 
These two methods are usually applied to model the uncertainties in multi-experts systems or data that contain both epistemic and aleatory errors \cite{ferson2015constructing, beer2013imprecise, sentz2002combination, baudrit2006practical}.
\subsection{Evidence theory and possibility theory}
\label{Sec:DS_theory} 
In the evidence theory, a belief mass function $M$ is defined over $\mathfrak{X}$, $M: \mathfrak{X} \rightarrow [0,1]$, such that 
$M(\emptyset) = 0$ and $\sum_{\mathcal{X}\in \mathfrak{X}} M(\mathcal{X}) =1.$\\
Two measures: belief measure $Bel$ and plausibility measure $Pl$ of a measurable set $\mathcal{X} \subset \mathbb{X}$ are respectively defined as
 \begin{equation}
 Bel (\mathcal{X}) := \sum_{\mathcal{X}'\in \mathfrak{X}} M(\mathcal{X}') \mathbf{1}(\mathcal{X}' \subset \mathcal{X}),
 \end{equation}
and
 \begin{equation}
 Pl (\mathcal{X}) := \sum_{\mathcal{X}'\in \mathfrak{X}} M(\mathcal{X}') \mathbf{1}(\mathcal{X}' \cap \mathcal{X} \neq \emptyset),
 \end{equation}
where $\mathbf{1}(\cdot)$ is a logical operator that yields the unit value if the condition expressed inside the brackets is true, and zero otherwise. 
When $M$ is a consonant mass function on a finite space $\mathbb{X}$, $Pl$ is a possibility measure in the possibility theory \cite{ZADEH19999}. 
Inversely, there is a consonant mass function $M$ such that the possibility measure is the plausibility function corresponding to $M$, (Theorem 2.5.4 page 42 in the reference \cite{halpern2017reasoning}).

One important ingredient of the evidence theory is the Dempster's rule of combination. That combination rule is summarized in Appendix \ref{appendix:Dempster_rule}. 
Based on that rule, we develop the inference method discussed in Sec.~\ref{Sec:inversion_random_set}.
\paragraph{Random set representation of the mass belief function.} Let $\mathfrak{X}_M = \{\mathcal{X}_1, \mathcal{X}_2, \cdots\}$ be the set of alls subsets $\mathcal{X}_i \in \mathfrak{X}$ such that $M(\mathcal{X}_i)>0$, and  $\eta$ be a uniform {rv} $\eta: \; \varOmega \rightarrow [0,1]$. 
Let ${X}: \varOmega \rightarrow \mathfrak{X}_M$ be a {rs} defined as $${X}(\omega) = \mathcal{X}_i\quad \text{if}\quad  \sum_{k=1}^{i-1}M(\mathcal{X}_k) < \eta(\omega) \leq \sum_{k=1}^i M(\mathcal{X}_k),$$
The distribution function $P_{{X}}$ and  the capacity functional $T_{{X}}$ of that {rs} are identical to the belief function, and plausibility function respectively, i.e.  
\begin{equation}
P_{{X}}(\mathcal{X}) = Bel(\mathcal{X}), \quad T_{{X}}(\mathcal{X}) = Pl(\mathcal{X}).
\end{equation} 



\subsection{Set of probability distributions}
\label{Sec:set_probabilities}
A way to describe a set of possible probability distributions of a {rv} is to define the upper and lower bounds on the cdf~\cite{ferson2015constructing, smith1995generalized}. Such expression of the set of possible probability distributions is called probability box. Here, we consider only the cases that the components of the vector $x$ are statistical independent.
Let $\bar{F}$ and $\munderbar{F}$ be the upper and the lower bounds of the cdf, such that $\bar{F} (x) \geq \munderbar{F} (x), \forall x \in \mathbb{X}$. 
The cdf $F$ of the considered {rv} follows the constraint
 \begin{equation}
\munderbar{F} (x) \leq  F(x) \leq \bar{F} (x).
\label{Eq:cdf-box}
\end{equation}
\paragraph{Random set interpretation of the probability box} A {rs} ${X}$ can be constructed from cdfs $\bar{F}$ and $\munderbar{F}$ as
\if01 \footnote{
Vinh's note: when they are not independent, Rosenblatt transform can be used to form the {rs}, however I could not prove that this {rs} satisfy the condition in Eq.~\ref{Eq:distribution_box_Prob}. Using Rosenblatt transform the {rs} ${X}(\omega)$ is obtained as
 \begin{equation}
{X}(\omega) = \{\mathcal{R}_{F}^{-1}(\eta(\omega))  \; | \;\munderbar{F} (x) \leq  F(x) \leq \bar{F} (x) \},
\end{equation}
where $\mathcal{R}_{F}$ is the Rosenblatt transformations \cite{rosenblatt1952remarks} of the distribution function $F$, $\xi(\omega)$ is the uniform {rv} in $[0,1]^n$. When the components $x_i$, $i=1,\dots n$, of the vector $x$ are independent, that transformation can be simplified as}\fi
 \begin{equation}
{X}_i(\omega) = [\bar{F}_i^{-1}(\eta_i(\omega)) \;,\; \munderbar{F}_i^{-1} (\eta_i(\omega))],\quad i=1,\dots n,
\end{equation}
here we abuse the notation $\eta$ and redefine it as the uniform {rv} in $[0,1]^n$. We have 
\begin{equation}
	P_{{X}}( (-\infty,x)) = \munderbar{F}(x),\quad T_{{X}} ((-\infty,x)) = \bar{F}.
	\label{Eq:distribution_box_Prob}
\end{equation}
\section {Interference in the context of {random set} theory}\label{Sec:inversion_random_set} 
In this section, we consider the inference problem in which prior uncertainty is represented using a rs $X(\omega)$. In addition, in order to account for noisy errors and coarsening effects of measurement data, their information is also modelled by a rs $X^d(\omega)$. That inference problem is explained in Sec.~\ref{Sec:inference_prop}. 
The proposed method to update the prior {rs} $X(\omega)$ given the rs $X^d(\omega)$ is discussed in Sec~\ref{Sec:inference_rule}. That inference method is based on the Dempster's rule of combination and agrees with Bayesian inference when the prior random set is simplified to be a {rv}. Furthermore, the proposed method is also linked to Bayesian inference via the capacity transform pdf of random sets. This issue is discussed in Sec~\ref{Sec:capacity_pdf}.   
\subsection{Inference problem}\label{Sec:inference_prop}
Inference problem deals with the identification of parameters, denoted as $x$, given measurements of other quantities $z$ such that the relation between $z$ and $x$ can be represented using a computational model $h$, i.e.
\begin{equation}
z = h(x).
\end{equation}
In practice, the measurement noise is inevitable. Assuming that the noise is additive, the actual measured value $\tilde{z}$ is given as
\begin{equation}
\tilde{z} = h(x) + \epsilon,
\end{equation}
where $\epsilon$ is the actual value of the noise happened when performing the measurement. Furthermore, we deal with the problem that the measurement data do not give directly the value of $\tilde{z}$ but a set $\mathcal{Z}$, e.g. an interval set, such that
\begin{equation}
\tilde{z} \in  \mathcal{Z}.
\end{equation}
That description of measurement data can be encountered in practice when the accuracy of measurement devices, e.g. sensing resolution and/or minimum (maximum) detectable values, are not negligible \cite{ferson2015constructing}. Since the actual value $\epsilon$ is uncertain, it is modelled as a {rv} $\epsilon(\omega)$.
The r.s. of ${X}^d$ induced by that measurement setup is given as
\begin{equation}
{X}^d (\omega) = \{x\in \mathbb{X} \;  | \;  h (x) + \epsilon(\omega) \in \mathcal{Z}\}.
\label{Eq:data_random_set}
\end{equation}
It is remarked that the developed method in this work is still applicable when the uncertainty of $\epsilon$ is modelled as random set.

\subsection{Inference rule}\label{Sec:inference_rule}

In this section, we develop an inference method to update the prior rs $X(\omega)$ given rs ${X}^d (\omega)$. From Section~\ref{Sec:Backgroud}, there are two ways to interpret a {rs}: (i) as a set of selection random variables, and (ii) as a set-valued {rv}. 
Under the former interpretation, one possible non-deterministic inference method is to apply Bayesian inference \textit{independently} to each selection {rv}. 
In this work, we propose an inference method using the latter interpretation. It is described as: the posterior {rs} is the intersection of the prior and the measurement induced random sets. This proposed interference method is based on the Dempster's rule of combination \cite{Dempster1967} summarized in Appendix~\ref{appendix:Dempster_rule}. 
Under the first inference method, a {rs} is treated as the set of independent probability distributions, while the proposed method considers the {rs} as a single piece of information.  
The updated result of the former is less informative than the latter (see Theorem 3.6.6 page 94 in the reference \cite{halpern2017reasoning}). Such comparison of the two methods are illustrated on a simple problem reported in the Appendix \ref{appendix:roburst_bayesian_vs_Dempster}. 
The inference method based on Dempster's rule of combination is given in the follow. 
\begin{definition}[Interference of a {rs} using Dempster's rule of combination] The update of the prior {rs} ${X}$ given the measurement induced {rs} ${X}^d$ is a posterior (updated) {rs} ${X}^{a}: (\varOmega, \mathfrak{A}, \mathbb{P}^{a}) \rightarrow \mathfrak{X}$ defined as
\begin{equation}
 {X}^{a}(\omega) := {X} (\omega)\cap {X}^d (\omega),  
 \label{Eq:Dempster's_rule}
\end{equation}
and probability $\mathbb{P}^{a}$ is updated as
\begin{equation}
\mathbb{P}^{a}(\dint \omega) 
 = \dfrac{\mathbb{P}(\dint \omega)\mathbf{1}({X}^a(\omega) \neq \emptyset)} { 1- K}.
 \label{Eq:Dempster's_rule_normalized_Pr}
\end{equation}
 where $K$ is the degree of conflict and given by
\begin{equation}
K = 1- \int_\varOmega \mathbf{1}({X}^a(\omega) \neq \emptyset) \mathbb{P}(\dint \omega).
\end{equation}

\label{Def:Dempster's_rule}
\end{definition}
The function $\mathbf{1}({X}^a(\omega)\neq \emptyset)$ in Eq.~(\ref{Eq:Dempster's_rule_normalized_Pr}) is interpreted as a likelihood function. The update of $\mathbb{P}^{a}$ is required to rule out empty sets, i.e. $P_{X^a}(\emptyset) = 0$, while the normalised property, $P_{X^a}(\mathbb{X}) = 1$, is conserved. The larger the value $K$, the more significant the conflict between the prior knowledge and the measurement data becomes. If $K = 1$, then the prior and the measurement induced random sets are said to be in total conflict and no interference is possible. 
The updated {rs} ${X}^{a}(\omega) $ is simplified to be {rv} in following cases: the prior {rs} is a {rv}; or the set $\mathcal{Z}$ has only one member and the function $h$ is strictly monotonic on ${X}(\omega)$ almost surely.
Furthermore, it is observed that ${X}^{a}(\omega) \subset {X}(\omega)$. 
Hence, in a sequential update \cite{evensen2009data}, i.e. the ${X}^{a}(\omega)$ becomes the prior {rs} when new data are available, the final updated {rs} might also be simplified as a {rv}.





\subsection*{Relation with Bayes's rule.} We show in the following that the proposed interference method of {rs} agrees with Bayesian inference when the prior {rs} is a {rv}. 
In this case, the prior is modelled as a rv, denoted as $x(\omega)$, 
the update method using Dempster's rule yields a posterior {rv} $x^{a}: (\varOmega, \mathfrak{A}, \mathbb{P}^a) \rightarrow \mathbb{X}$ given by
\begin{equation}
 x^{a}(\omega) = 
 \begin{cases}
   x(\omega) \quad  &\text{if} \quad  x(\omega) \in X^d(\omega)\\
   \emptyset \quad &\text{otherwise}
 \end{cases},
 \label{Eq:x^a}
 \end{equation}
where the probability $\mathbb{P}^a$ is given as
\begin{equation}
 \mathbb{P}^a(\dint \omega) 
 = \dfrac{\mathbb{P}(\dint \omega) \mathbf{1}(x^{a}(\omega) \neq \emptyset)} 
 {\int_{\varOmega}  \mathbf{1}(x^{a}(\omega) \neq \emptyset) \mathbb{P}(\dint \omega)}.
 \label{Eq:DS_rule_to_Bayes}
\end{equation}
The following theorem shows that the r.v. $x^a(\omega)$ is the Bayesian update of the prior r.v. $x(\omega)$. 
\begin{theorem}
The pdf $\pi{^a}$ of the {rv} $x^a(\omega)$ defined in Eq.~(\ref{Eq:x^a}) is the posterior pdf yielded using the Bayes's rule as 
\begin{equation}
\pi{^a} (x) = \dfrac{\pi(x) \mathcal{L}(x)}{\int_{\mathbb{X}} \pi(x) \mathcal{L}(x) d x},
\label{Eq:Bayes_rule_from_Dempster_rule}
\end{equation}
where $\pi(x)$ is the pdf of the prior {rv} $x(\omega)$, and $\mathcal{L}(x)$ is the likelihood function
  \begin{equation}
  \mathcal{L}(x) = \int_{\varOmega} \mathbf{1}_{ X^d(\omega)}(x)  \mathbb{P}(\dint \omega),
  \label{Eq:likelihood}
  \end{equation}
  where $\mathbf{1}_{ \mathcal{X}}(x) := \mathbf{1} (x\in \mathcal{X}) $ is the characteristic function.
  \label{lemma:bayes_dempster_rules}
\end{theorem}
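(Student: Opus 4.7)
The plan is to compute the distribution of $x^a(\omega)$ directly from its defining equations (\ref{Eq:x^a})--(\ref{Eq:DS_rule_to_Bayes}) and identify its Radon--Nikodym derivative with respect to Lebesgue measure. I would work at the level of a generic Borel set $A \subset \mathbb{X}$ and show
\begin{equation*}
\mathbb{P}^a(\{\omega \,|\, x^a(\omega) \in A\}) \;=\; \int_A \frac{\pi(x)\mathcal{L}(x)}{\int_{\mathbb{X}} \pi(y)\mathcal{L}(y)\,\dint y}\,\dint x,
\end{equation*}
from which the pdf identity (\ref{Eq:Bayes_rule_from_Dempster_rule}) follows because this holds for every $A$.

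The first step is to rewrite the event $\{x^a(\omega) \in A\}$ using (\ref{Eq:x^a}): it is precisely $\{x(\omega) \in A\} \cap \{x(\omega) \in X^d(\omega)\}$. Substituting this into (\ref{Eq:DS_rule_to_Bayes}) gives
\begin{equation*}
\mathbb{P}^a(\{x^a \in A\}) \;=\; \frac{\int_{\varOmega} \mathbf{1}_A(x(\omega))\,\mathbf{1}_{X^d(\omega)}(x(\omega))\,\mathbb{P}(\dint\omega)}{\int_{\varOmega} \mathbf{1}_{X^d(\omega)}(x(\omega))\,\mathbb{P}(\dint\omega)}.
\end{equation*}
Next I would invoke the standing modelling assumption that the prior $x(\omega)$ and the measurement-noise random set $X^d(\omega)$ (which, by (\ref{Eq:data_random_set}), is a function of $\epsilon(\omega)$) are generated from independent sources of randomness on $(\varOmega,\mathfrak{A},\mathbb{P})$. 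This lets me disintegrate the joint expectation as an iterated integral and apply Fubini's theorem: integrating first over the $X^d$-randomness with $x$ held fixed yields exactly the quantity defined in (\ref{Eq:likelihood}), namely $\mathcal{L}(x) = \int_{\varOmega} \mathbf{1}_{X^d(\omega)}(x)\,\mathbb{P}(\dint\omega)$. Then integrating over the distribution of $x(\omega)$ against its prior density $\pi$ turns both numerator and denominator into ordinary Lebesgue integrals over $\mathbb{X}$, giving $\int_A \pi(x)\mathcal{L}(x)\,\dint x$ and $\int_{\mathbb{X}} \pi(x)\mathcal{L}(x)\,\dint x$ respectively.

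The last step is simply to read off the density: since the above equality holds for every Borel $A$, the Radon--Nikodym derivative of the distribution of $x^a$ with respect to Lebesgue measure is the integrand, which is precisely (\ref{Eq:Bayes_rule_from_Dempster_rule}). I would also note in passing that the denominator equals $1-K$ (the normalising constant appearing in Definition~\ref{Def:Dempster's_rule}), which makes the correspondence with the Dempster update explicit and confirms that the "no total conflict" requirement $K<1$ is precisely the condition that the Bayesian normaliser is nonzero.

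The main obstacle is the Fubini/independence step: the statement as written does not spell out the probabilistic independence between the prior parameter and the measurement noise, and without this the inner integral does not factor into $\mathcal{L}(x)$ with $x$ acting as a deterministic argument. I would therefore state this independence explicitly (it is the natural analogue of the standard Bayesian setup) and justify the swap of integrals by noting that $\mathbf{1}_A$ and $\mathbf{1}_{X^d(\omega)}(x)$ are jointly measurable and bounded, so Fubini applies unconditionally. Everything else is bookkeeping with indicator functions.
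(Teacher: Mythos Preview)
Your proposal is correct and follows essentially the same approach as the paper: substitute the updated measure $\mathbb{P}^a$ from (\ref{Eq:DS_rule_to_Bayes}), then use the independence of $x(\omega)$ and $X^d(\omega)$ to factor the integral into $\pi(x)\mathcal{L}(x)$, and finally identify the normalising constant. The only cosmetic difference is that the paper works pointwise with the Dirac-delta representation $\pi^a(x)=\int_\varOmega \delta(x-x^a(\omega))\,\mathbb{P}^a(\dint\omega)$, whereas you compute $\mathbb{P}^a(\{x^a\in A\})$ for a generic Borel set and read off the Radon--Nikodym derivative; your version is a bit more rigorous but the underlying argument is identical.
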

It is noted that to avoid Borel–Kolmogorov paradox, i.e. $\pi{^a} (x)$ is undefined, we consider here the case that the random set $X^d(\omega)$ satisfying $X^d(\omega) = \text{cl}\{\;\text{int}(X^d(\omega))\}$, where $\text{int}(X^d(\omega))$ is the set of interior points of $X^d(\omega)$, almost surely. For the special case that the {rs} $X^d(\omega)$ is a {rv}, we mention it explicitly. This assumption is also applied for random sets $X(\omega)$, and $X^a(\omega)$.  

The proof of Theorem~\ref{lemma:bayes_dempster_rules} is given in Appendix~\ref{appendix:Proof_bayes_dempster_rules}. 
Theorem~\ref{lemma:bayes_dempster_rules} shows that the proposed inference agrees with the Bayesian method when the prior uncertainty is modelled using a probability distribution. 
We shall show later in Section~\ref{Sec:capacity_pdf} that the likelihood function $\mathcal{L}(x)$ defined in the Eq.~(\ref{Eq:likelihood}) is proportional to the capacity transform pdf of the measurement induced {rs} ${X}^d$. 

\subsection{Capacity transform density function}
\label{Sec:capacity_pdf}
The capacity transform pdf of {rs} ${X}$ is defined as
\begin{equation}
\pi_{T}(x) 
= \dfrac{\int_{\varOmega} \mathbf{1}_{ X(\omega)}(x) \mathbb{P}(\dint\omega)}
{\int_{\mathbb{X}} \int_{\varOmega} \mathbf{1}_{ X(\omega)}(x)  \mathbb{P}(\dint\omega) \dint x} .
\end{equation}
For $\pi_{T}(x)$ to be well-defined, it is required that $0 < \int_{\mathbb{X}} \int_{\varOmega} \mathbf{1}_{ X(\omega)}(x)  \mathbb{P}(\dint\omega) \dint x < \infty$. 
In the context of DS theory of evidence, this pdf is called plausibility transform pdf \cite{voorbraak1989computationally,cobb2006plausibility}. When ${X}(\omega)$ is a {rv} $x(\omega)$, the capacity transform pdf $\pi_{T}(x)$ of the random set $B(x(\omega),r)$--closed balls centred at $x(\omega)$ and having radius $r$--converges to the pdf of $x(\omega)$ as $r \rightarrow 0$.

In a similar way of deriving $\pi_{T}(x)$, the capacity transform pdf $\pi_{T}^a$ of the updated {rs} ${X}^{a}$ given in Eq.~(\ref{Eq:Dempster's_rule}) is defined as 
\begin{equation}
\pi_{T}^a(x) \propto \int_{\varOmega} \mathbf{1}_{ X^a(\omega)}(x) \; \; \mathbb{P}^a(\dint\omega).
\label{Eq:capacity_pdf_updated_rs}
\end{equation}
The capacity transform pdf $\pi_{T}^d$ of the measurement induced {rs} ${X}^d$, see Eq.~(\ref{Eq:data_random_set}), is derived as
\begin{equation}
\pi_{T}^d(x) \propto \int_{\varOmega} \mathbf{1}_{ X^d(\omega)}(x) \; \; \mathbb{P}(\dint \omega) 
= \int_{\varOmega} \mathbf{1}_{\mathcal{Z}}(h(x)+ \epsilon(\omega))\; \mathbb{P}(\dint\omega)  .
\label{Eq:capacity_pdf_data_rs}
\end{equation}
It is remarked that the right hand side of Eq.~(\ref{Eq:capacity_pdf_data_rs}) is the likelihood function $\mathcal{L}$ defined in the Eq.~(\ref{Eq:likelihood}).
The relation of $\pi_{T}^a$ to $\pi_{T}$ and $\pi_{T}^d$ (or $\mathcal{L}$)  is given in the following theorem. 
\begin{theorem} [Capacity transformed pdf of the posterior {rs}]
The capacity transformed pdf $\pi_{T}^a$ of the posterior {rs} ${X}^{a}$ is the posterior pdf obtained by Bayesian inference with prior pdf $\pi_{T}$ and 
the likelihood function $\mathcal{L}(x)$ given by Eq.~(\ref{Eq:likelihood}), that is
\begin{equation}
\pi_{T}^a (x) = \dfrac{\pi_{T}(x) \mathcal{L}(x)} {\int_\mathbb{X} \pi_{T}(x) \mathcal{L}(x) \dint x}.
\label{Eq:capacity_tranformed_pdf}
\end{equation}
\label{TH:capacity_tranformed_pdf}
\end{theorem}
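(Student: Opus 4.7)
The plan is to start from the definition of $\pi_{T}^a$ in Eq.~(\ref{Eq:capacity_pdf_updated_rs}), unfold the intersection and the updated measure $\mathbb{P}^a$, and reduce the resulting integral to a product of the prior capacity integral and the likelihood integral. Finally, the normalising constant of Bayes' rule will pop out as the inverse of the denominator in Eq.~(\ref{Eq:capacity_tranformed_pdf}).

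First I would write $\mathbf{1}_{X^a(\omega)}(x)=\mathbf{1}_{X(\omega)\cap X^d(\omega)}(x)=\mathbf{1}_{X(\omega)}(x)\,\mathbf{1}_{X^d(\omega)}(x)$, and note that whenever the right-hand side is nonzero the intersection already contains $x$, so the extra factor $\mathbf{1}(X^a(\omega)\neq\emptyset)$ appearing through $\mathbb{P}^a$ is automatically equal to $1$. Substituting Eq.~(\ref{Eq:Dempster's_rule_normalized_Pr}) therefore yields
\begin{equation*}
\pi_{T}^a(x)\ \propto\ \frac{1}{1-K}\int_{\varOmega}\mathbf{1}_{X(\omega)}(x)\,\mathbf{1}_{X^d(\omega)}(x)\,\mathbb{P}(\dint\omega).
\end{equation*}
The factor $1/(1-K)$ is a constant independent of $x$ and so can be absorbed into the proportionality symbol.

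The key step is the factorisation of the remaining integral. The prior {rs} $X$ and the measurement-induced {rs} $X^d$ carry physically unrelated pieces of information (prior knowledge versus measurement noise $\epsilon$), so they should be taken as stochastically independent. I would make this explicit by viewing $(\varOmega,\mathfrak{A},\mathbb{P})$ as (or as containing) a product space on which $X$ and $X^d$ depend on independent coordinates. Under this independence, Fubini's theorem gives
\begin{equation*}
\int_{\varOmega}\mathbf{1}_{X(\omega)}(x)\,\mathbf{1}_{X^d(\omega)}(x)\,\mathbb{P}(\dint\omega)
=\left(\int_{\varOmega}\mathbf{1}_{X(\omega)}(x)\,\mathbb{P}(\dint\omega)\right)\left(\int_{\varOmega}\mathbf{1}_{X^d(\omega)}(x)\,\mathbb{P}(\dint\omega)\right),
\end{equation*}
and by definition the first factor is proportional to $\pi_{T}(x)$ and the second equals $\mathcal{L}(x)$ by Eq.~(\ref{Eq:likelihood}) (equivalently, $\propto\pi_T^d(x)$). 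Hence $\pi_{T}^a(x)\propto\pi_{T}(x)\,\mathcal{L}(x)$, and enforcing $\int_\mathbb{X}\pi_{T}^a(x)\dint x=1$ produces the denominator in Eq.~(\ref{Eq:capacity_tranformed_pdf}).

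The main obstacle I foresee is not any hard estimate but rather the measure-theoretic bookkeeping in the intersection step: one must ensure that $\pi_{T}^a$ is well-defined (the non-conflict assumption $K<1$ together with the regularity assumption $X^a(\omega)=\mathrm{cl}(\mathrm{int}\,X^a(\omega))$ stated just after Theorem~\ref{lemma:bayes_dempster_rules}), and one must justify the independence of $X$ and $X^d$ cleanly enough to apply Fubini. Once this is set up, the rest of the argument is an algebraic manipulation and the identification with Bayes' rule is immediate, exactly parallel to the reasoning already used in the proof of Theorem~\ref{lemma:bayes_dempster_rules}.
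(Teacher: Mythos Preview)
Your proposal is correct and follows essentially the same route as the paper: substitute the definitions of $X^a$ and $\mathbb{P}^a$ into the capacity transform formula, absorb the constant $1/(1-K)$, invoke the independence of $X$ and $X^d$ to factor the integral into $\pi_T(x)\,\mathcal{L}(x)$, and normalise. Your version is in fact more careful than the paper's, since you spell out why the indicator $\mathbf{1}(X^a(\omega)\neq\emptyset)$ is redundant and flag the well-definedness and independence hypotheses that the paper uses tacitly.
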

\begin{proof}
Inserting the expressions of ${X}^a$ in  Eq.~(\ref{Eq:Dempster's_rule}) and $\mathbb{P}^a$ in Eq.~(\ref{Eq:Dempster's_rule_normalized_Pr}) into Eq.~(\ref{Eq:capacity_pdf_updated_rs}) we have
 \begin{equation}
 \pi_{T}^a (x) \propto \int_{\varOmega} \mathbf{1}_{ X(\omega)}(x) \mathbf{1}_{ X^d(\omega)}(x) \mathbb{P}(\dint \omega).
 \end{equation}
As ${X}(\omega)$ and ${X}^d(\omega)$ are independent, we have
 \begin{equation}
\int_{\varOmega} \mathbf{1}_{ X(\omega)}(x) \mathbf{1}_{ X^d(\omega)}(x) \mathbb{P}(\dint \omega) = \int_{\varOmega} \mathbf{1}_{ X(\omega)}(x) \mathbb{P}(\dint \omega) \int_{\varOmega} \mathbf{1}_{ X^d(\omega)}(x) \mathbb{P}(\dint \omega) = \pi_{T}(x) \mathcal{L}(x).
 \end{equation}
The expression of $\pi_{T}^a$ can be rewritten as in the Eq.~(\ref{Eq:capacity_tranformed_pdf}).
\end{proof}

Using Theorem~\ref{TH:capacity_tranformed_pdf}, it is not required to compute ${X}^a$ explicitly in advance to evaluate the updated capacity transform pdf $\pi_{T}^a$. 
Instead, we can sample the pdf $\pi_{T}^a$ directly  using Bayesian reference, and  these samples are then used to approximate the posterior {rs} ${X}^{a}$ in Section~\ref{Sec:approximation_of_posterior_randomset}.



\section{Approximation of the posterior {rs} using a random discrete set}\label{Sec:approximation_of_posterior_randomset}
To reduce the computational burden, we approximate the posterior {rs} ${X}^{a}$ by a random discrete set. 
Instead of searching for all members of the set ${X}^{a}(\omega)$, 
we limit them only to be elements of a discrete set $\{x^{(1)}, \dots, x^{(\kappa)} \} \subset \mathbb{X}$,
which are generated from a proposed pdf $\pi^{e}$ over the domain $\mathbb{X}$, such that $\pi^{e}(x)>0,\; \forall x \in {X}^{a}(\omega)$ almost surely. There are several ways to choose the proposed pdf $\pi^{e}$. For example, one can use a uniform distribution (if $\mathbb{X}$ is bounded), or an unbounded distribution with a large (co)variance (if $\mathbb{X}$ is unbounded), or the distribution of a selection {rv} of prior {rs}. In these examples, the choices of $\pi^{e}$ are non-informative since they do not account for the measurement data. 
Here we use the pdf $\pi_T^a$ as the proposed pdf $\pi^e$. With this choice for $\pi^e$, we have an informative proposed pdf, while the computational methods that are well-developed in the framework of Bayesian inference can be directly applied to obtain pdf $\pi_T^a$ using Theorem \ref{TH:capacity_tranformed_pdf}. 

The approximation using the random discrete set, denoted as $\hat{X}^{a, \kappa}$, is formulated as 
\begin{equation}
{X}^{a} (\omega) \approx  \hat{X}^{a, \kappa} (\omega ) := {X}^{a} (\omega) \cap  \{x^{(1)}, \dots, x^{(\kappa)}\}. 
\label{Eq:intersection_MC_approximate}
\end{equation}
Using the definition of ${X}^{a} (\omega)$ in Eq.~(\ref{Eq:Dempster's_rule}), its approximated set $\hat{X}^{a,\kappa}$ can be expressed as
\begin{equation}
\hat{X}^{a,\kappa} (\omega) =  \{x^{(i)}\in \{x^{(1)}, \dots, x^{(\kappa)}\}  \;|\; x^{(i)} \in {X}(\omega) \cap  {X}^d(\omega) \}.	
\label{Eq:approximated_set}
\end{equation}
The larger the number $\kappa$, the better the approximation. 
\if00
Such approximation is verified by the following Theorem.
\begin{theorem}[Discrete set approximation of bounded set using samples of a probability distribution]
	Let $ \mathcal{X}\neq \emptyset$ be a bounded set that contains no isolated point and $\pi^{e}(x)$ be a pdf such that $\pi^{e}(x)>0,\;\forall x \in \mathcal{X}$, and $\{x^{(1)}, \dots, x^{(\kappa)}\}$ be its $\kappa$ samples, then the Hausdorff distance between the set $\mathcal{X}$ and the set $\hat{\mathcal{X}} := \mathcal{X} \cap \{x^{(1)}, \dots, x^{(\kappa)}\}$ converges to 0 as $\kappa \rightarrow \infty$ almost surely. 
	\label{TH:condtion_of_proposed_pdf}
\end{theorem}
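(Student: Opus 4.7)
The plan is to reduce the two-sided Hausdorff distance to a one-sided covering statement, cover the bounded set $\mathcal{X}$ by finitely many small balls, and then invoke a Borel--Cantelli/tail-of-i.i.d.\ argument to guarantee that, almost surely, each of those balls eventually captures a sample belonging to $\mathcal{X}$.

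First I would note that $\hat{\mathcal{X}}\subset\mathcal{X}$ by construction, so the Hausdorff distance collapses to the one-sided supremum
\[
d_H(\mathcal{X},\hat{\mathcal{X}}) \;=\; \sup_{a\in\mathcal{X}} d(a,\hat{\mathcal{X}}),
\]
and it suffices to show that this quantity tends to $0$ almost surely. Taking $\mathcal{X}$ to be closed (the standing hypothesis on random sets in the paper) makes it compact, so for any fixed $\epsilon>0$ I can choose a finite sub-cover by balls $B(y_1,\epsilon/2),\dots,B(y_N,\epsilon/2)$ with centres $y_j\in\mathcal{X}$. The triangle inequality then shows that $\sup_{a\in\mathcal{X}}d(a,\hat{\mathcal{X}})<\epsilon$ as soon as every slice $\mathcal{X}\cap B(y_j,\epsilon/2)$ contains at least one of the samples $x^{(i)}$.

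The probabilistic step is routine once the positivity is in place. For each $j$, let $p_j:=\int_{\mathcal{X}\cap B(y_j,\epsilon/2)}\pi^e(x)\,\dint x$. Assuming $p_j>0$, the i.i.d.\ structure of the samples gives $\mathbb{P}(\text{no sample among }x^{(1)},\dots,x^{(\kappa)}\text{ lies in the slice})=(1-p_j)^\kappa$, which is summable in $\kappa$; Borel--Cantelli (or simply the observation that an infinite i.i.d.\ sequence almost surely visits any positive-probability set) then implies that almost surely each slice is eventually hit. Intersecting over the finitely many $j$ and over the countable family $\epsilon\in\{1/m:m\in\mathbb{N}\}$ yields a single probability-one event on which, for every $m$, $d_H(\mathcal{X},\hat{\mathcal{X}}^{(\kappa)})<1/m$ for all sufficiently large $\kappa$, which is exactly the claimed a.s.\ convergence.

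The main obstacle, and the place where the stated hypotheses need a careful reading, is establishing $p_j>0$. Merely requiring $\mathcal{X}$ to have no isolated points is not enough in general: since $\pi^e$ is a density with respect to Lebesgue measure on $\mathbb{R}^n$, one needs $\mathcal{X}\cap B(y_j,\epsilon/2)$ to carry positive Lebesgue measure, which for instance a Cantor-type set would violate. The natural remedy, already built into the paper, is the standing regularity assumption $\mathcal{X}=\text{cl}\{\text{int}(\mathcal{X})\}$ imposed on $X^a(\omega)$ in Section~\ref{Sec:inversion_random_set}: under it every point of $\mathcal{X}$ is a limit of interior points, so $B(y_j,\epsilon/2)$ contains some interior point and hence a smaller ball lying entirely in $\mathcal{X}$, on which $\pi^e>0$ gives $p_j>0$. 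I would therefore invoke this regularity explicitly when executing the argument, with the ``no isolated point'' wording understood in this stronger sense.
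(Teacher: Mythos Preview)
Your proposal is correct and follows essentially the same route as the paper's proof in Appendix~\ref{appendix:condtion_of_proposed_pdf}: reduce to the one-sided Hausdorff distance via $\hat{\mathcal{X}}\subset\mathcal{X}$, cover the bounded set by finitely many balls, bound the miss probability of each ball by $(1-p_j)^\kappa$, and invoke the regularity assumption $\mathcal{X}=\text{cl}(\text{int}(\mathcal{X}))$ to ensure $p_j>0$. Your explicit appeal to Borel--Cantelli and the countable intersection over $\epsilon=1/m$ is in fact a bit more careful than the paper, which passes directly from $P(d_H\geq 2r)\to 0$ to almost-sure convergence without spelling out this step.
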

It is remarked that the Hausdorff distance between two sets is zero only if they have an identical closure. The proof of Theorem~\ref{TH:condtion_of_proposed_pdf} is given in the Appendix~\ref{appendix:condtion_of_proposed_pdf}. 
\fi

\subsection{Sampling method for the posterior {random set}} 
The $\kappa$ samples $x^{(1)}, \dots, x^{(\kappa)}$ of the pdf $\pi_{T}^a$ can be generated using classical methods in the Bayesian inference framework. In this work, we use the Metropolis Hasting MCMC \cite{tierney1994markov, gilks1995markov} algorithm for this task. Note that the algorithm provides not only the samples $x^{(1)}, \dots, x^{(\kappa)}$ but also their model responses $h(x^{(1)}), \dots, h(x^{(\kappa)})$. 

\paragraph{MC simulation of the random discrete set $\hat{X}^{a,\kappa}$.}
Given  the samples $x^{(1)}, \dots, x^{(\kappa)}$ and their model responses, a MC simulation is then applied to obtain the $N$ samples, $\hat{X}^{a,\kappa \; (i)}$ where $i = 1, \dots, N$, of the approximated random discrete set $\hat{X}^{a, \kappa}$ expressed in Eq.~(\ref{Eq:approximated_set}). 
This MC simulation is reported in Algorithm~\ref{algorith:approximate_random_set}. 
\begin{algorithm}
	\caption{MC simulation to sample the approximated posterior {rs} $\hat{X}^{a}$}\label{algorith:approximate_random_set}
	\begin{algorithmic}[1]
		
		\State Input:  $\kappa$ samples  $x^{(1)}, \dots, x^{(\kappa)}$ of the pdf $\pi_T^a$ and their model responses $h(x^{(1)}), \dots, h  (x^{(\kappa)})$ using a MCMC simulation.

		\State Generate $N_1$ samples of prior {rs} $\mathcal{X}^{(1)}, \dots, \mathcal{X}^{(N_1)}$
		\State Generate $N_2$ samples of measurement error  $\epsilon^{(1)}, \dots, \epsilon^{(N_2)}$
		\State $N\gets 0$
		\For{$\mathcal{X}^{(i)}$ in $\mathcal{X}^{(1)}, \dots, \mathcal{X}^{(N_1)}$}
		
		\For{$\epsilon^{(j)}$ in $\epsilon^{(2)}, \dots, \epsilon^{(N_2)}$}
		
		\State $\mathcal{X}^* \gets \{x\in \{x^{(1)}, \dots, x^{(\kappa)}\} \cap  \mathcal{X}^{(i)} \;|\quad  h(x) +\epsilon^{(j)} \in \mathcal{Z}\}$
		
		\If {${X}^* \neq \emptyset$}
		\State $N \gets N + 1$
		\State ${X}^{a, \kappa \;(N)} = {X}$
		
		\EndIf
		\EndFor
		\EndFor
		\State \textbf{return} $\hat{X}^{a,\kappa \;(i)}$ where $i = 1, \dots, N$
	\end{algorithmic}
\end{algorithm}

From the samples, $\hat{X}^{a,\kappa \; (i)}$ where $i = 1, \dots, N$, the RSD $P_{{X}^a}$ and the capacity functional $T_{{X}^a}$ of the posterior {rs} ${X}^a$ can be approximated respectively as
\begin{equation}
	P_{{X}^a}(\mathcal{X}) \approx \dfrac{1}{N}\sum_{i=1}^{N} \mathbf{1}(\hat{X}^{a, \kappa \;(i)} \subset \mathcal{X} ), \quad
	T_{{X}^a}(\mathcal{X}) \approx \dfrac{1}{N} \sum_{i=1}^{N} \mathbf{1}(\hat{X}^{a, \kappa \;(i)} \cap \mathcal{X} \neq \emptyset)	
	\label{Eq:estimate_capacity}.
\end{equation}

\textbf{Remarks}: The evaluation of the model $h$ is only required for the MCMC algorithm to sample $x^{(1)}, \dots, x^{(\kappa)}$, but not in the later MC simulation summarized in Algorithm~\ref{algorith:approximate_random_set}. In other words, the latter MC simulation is independent of the complex of the model $h$. Therefore the computational cost of our method is comparable with the classical methods used for the Bayesian inference.  With the proposed method, we do not need an optimization process to find member of ${X}^{a}(\omega)$. 
Furthermore, when ${X}^{a}(\omega)$ becomes a {rv}, its pdf is the capacity transform pdf $\pi_{T}^a$, and therefore $x^{(1)}, \dots, x^{(\kappa)}$ are its samples. 


\section{Set-valued selection expectation of posterior {random set}}
\label{Sec:selection_expectation}
In this section, the support function of a given set is introduced. We use these functions as the mean to evaluate the set-valued selection expectation of the posterior {rs} (the definition of selection expectation is formulated in Eq~(\ref{Eq:slection_expectation})).  \\
A support function $\gamma: \mathfrak{X}\times \mathbb{S}^{n-1} \rightarrow  \mathbb{R} $ is defined as 
\begin{equation}
\gamma(\mathcal{X}, \nu) =  \sup_{x \in \mathcal{X}}\ {\nu \cdot x}.
\end{equation}
where $\nu$ is a vector on the unit sphere $\mathbb{S}^{n-1}$ and $\cdot$ is the scalar product.
Applying that support function to the {rs} ${X}^a$, we obtain the scalar-valued {rv} $\gamma({X}^a(\omega), \nu)$. 
\begin{theorem}
If the basic probability space is non-atomic, the selection expectation $\mathbb{E}_S({X}^a)$ is a convex set, and 
\begin{equation}
\gamma(\mathbb{E}_S({X}^a), \nu) = \mathbb{E} (\gamma({X}^a, \nu)).
\label{Eq:support_function_slection_expectation}
\end{equation}
\label{Prop:convex_selection_expectation}
\end{theorem}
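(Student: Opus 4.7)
The plan is to prove the two claims separately, first the support function identity and then the convexity, although they are intertwined through the classical Aumann integration machinery for set-valued maps.

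First, I would establish the easy inequality $\gamma(\mathbb{E}_S(X^a), \nu) \le \mathbb{E}(\gamma(X^a, \nu))$. For any integrable selection $x(\omega) \in \mathcal{S}$, one has $\nu \cdot x(\omega) \le \gamma(X^a(\omega), \nu)$ pointwise almost surely, so taking expectations yields $\nu \cdot \mathbb{E}(x(\omega)) \le \mathbb{E}(\gamma(X^a, \nu))$. Taking the supremum over all selections and using the closure in the definition \eqref{Eq:slection_expectation} together with continuity of $y \mapsto \nu \cdot y$ produces the claimed bound.

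Second, for the reverse inequality I would invoke a measurable selection argument. Define the set-valued map $\omega \mapsto \{x \in X^a(\omega) : \nu \cdot x = \gamma(X^a(\omega), \nu)\}$; since $X^a(\omega)$ is closed and integrably bounded (inherited from the prior via $X^a \subset X$), this exposed face is a non-empty closed-valued measurable multifunction. By the Kuratowski–Ryll-Nardzewski selection theorem I obtain a measurable selection $x^*(\omega)$ with $\nu \cdot x^*(\omega) = \gamma(X^a(\omega), \nu)$ a.s., and integral boundedness gives $x^* \in \mathcal{S}$. Taking expectation of this equality and using that $\mathbb{E}(x^*) \in \mathbb{E}_S(X^a)$ supplies $\mathbb{E}(\gamma(X^a, \nu)) = \nu \cdot \mathbb{E}(x^*) \le \gamma(\mathbb{E}_S(X^a), \nu)$, completing \eqref{Eq:support_function_slection_expectation}.

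For convexity, I would use the non-atomicity of $(\varOmega, \mathfrak{A}, \mathbb{P})$ via a Lyapunov-type argument. Given two selection expectations $\mathbb{E}(x_1)$ and $\mathbb{E}(x_2)$ with $x_1, x_2 \in \mathcal{S}$ and a convex weight $\lambda \in [0,1]$, the goal is to exhibit a selection $x^*$ with $\mathbb{E}(x^*) = \lambda \mathbb{E}(x_1) + (1-\lambda)\mathbb{E}(x_2)$. The natural candidate $x^* = x_1 \mathbf{1}_A + x_2 \mathbf{1}_{A^c}$ for a suitable splitting set $A$ requires that we can partition $\varOmega$ so that the vector-valued measure $A \mapsto \int_A (x_1 - x_2)\,\dint \mathbb{P}$ attains the desired intermediate value; non-atomicity guarantees the range of this measure is convex (Lyapunov's theorem), yielding the required $A$ and hence closure of $\{\mathbb{E}(x) : x \in \mathcal{S}\}$ under convex combinations.

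The main obstacle I anticipate is the measurability of the exposed face of $X^a(\omega)$ perpendicular to $\nu$, since $X^a$ is defined through the intersection operation of \eqref{Eq:Dempster's_rule} together with the reweighting \eqref{Eq:Dempster's_rule_normalized_Pr}. One must verify that $X^a$ remains a closed-valued measurable multifunction on $(\varOmega, \mathfrak{A}, \mathbb{P}^a)$ and that integral boundedness is preserved, so that both the selection theorem and the Lyapunov step apply. Once these regularity points are settled, the two assertions of the theorem follow from the standard Aumann integral framework, and no further computation is needed.
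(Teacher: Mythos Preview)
Your argument is essentially the standard proof of this classical fact about Aumann integrals, and it is correct in outline. The paper, however, does not prove this theorem at all: it simply quotes the result and refers the reader to Molchanov, \emph{Theory of Random Sets}, Chapter~2, Theorem~1.26. So your proposal already goes well beyond what the paper itself supplies, and what you have sketched is precisely the textbook route one finds in that reference: measurable selections for the support-function identity, and Lyapunov's convexity theorem for the convexity of the set of selection expectations under non-atomicity.

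One minor point worth making explicit: in your reverse-inequality step you need the exposed face $\{x\in X^a(\omega):\nu\cdot x=\gamma(X^a(\omega),\nu)\}$ to be non-empty, i.e.\ the supremum must be attained. This holds because integral boundedness forces $\sup_{x\in X(\omega)}\|x\|<\infty$ a.s., so $X^a(\omega)\subset X(\omega)$ is a.s.\ bounded and closed, hence compact; you allude to this via ``integrably bounded (inherited from the prior)'' but the compactness consequence is what actually does the work. Also note that the support-function identity itself does not require non-atomicity (your selection argument nowhere uses it); only the convexity claim does, which your Lyapunov step correctly isolates.
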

The proof of Theorem \ref{Prop:convex_selection_expectation} can be found in the Chapter 2 of reference \cite{molchanov2005theory} (Theorem 1.26).
Thanks to Theorem \ref{Prop:convex_selection_expectation}, the set-valued selection expectation $\mathbb{E}_S ({X}^a)$ can be obtained via the probabilistic expectation of {rv} $\gamma({X}^a(\omega), \nu)$, e.g. using MC method. From $N$ samples $\hat{X}^{a,\kappa \; (1)}, \dots, \hat{X}^{a, \kappa\;(N)}$ of random discrete set of $\hat{X}^{a,\kappa}$ obtained using the Algorithm~\ref{algorith:approximate_random_set}, the expectation of {rv} $\gamma({X}^a(\omega), \nu)$ can be evaluated as
\begin{equation}
\mathbb{E} (\gamma({X}^a, \nu)) \approx \mathbb{E} (\gamma(\hat{X}^{a, \kappa}, \nu)) = \lim _{N\rightarrow \infty} \dfrac{1}{N}\sum_{i=1}^N \gamma(\hat{X}^{a, \kappa\;(i)}, \nu).
\label{Eq:approximate_expectation_set}
\end{equation}
From the Theorem~\ref{Prop:convex_selection_expectation}, if the elementary probability space is non-atomic, the set-valued expectation of the posterior {rs} can be identified as,
\begin{equation}
\mathbb{E}_S({X}^a) = \cap_{\nu \in \mathbb{S}^{n-1}} \{\;x \in \mathbb{X}: \quad \nu\cdot x \leq \mathbb{E} (\gamma({X}^a, \nu)) \;\}.
\end{equation} 
\section{Numerical example}
\label{Sec:numerical_example}
\subsection{Problem setting}To illustrate the developed method, the truss system, see Fig.~\ref{Fig:bridge_truss}, is considered. 
For the sake of simplification, the inference is performed on two parameters: the stiffness $E$ of the horizontal beams, 
and the applied forces $q$. In terms of notation, these parameter are sorted into the vector $x$, i.e. $x = [E, q]$. We fix the other parameters, i.e. the bar cross area, the stiffness of the diagonal bar, as constants. 
We shall use the \textit{virtual} measurement data of the nodal vertical displacements to perform the inference. 
\begin{figure} [th]
  \centering
  \includegraphics[scale=0.55]{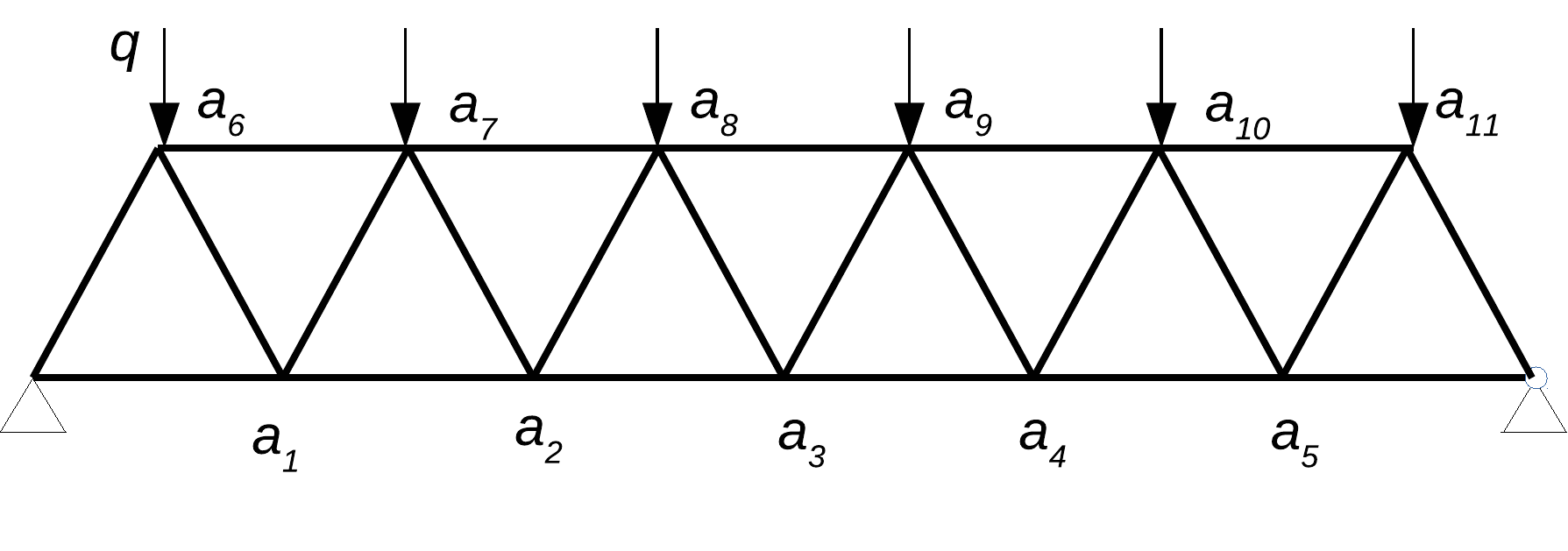}
  \caption{Truss system}
  \label{Fig:bridge_truss}
\end{figure}

The truss system can be solved using a finite element (FE) model of bar elements as
\begin{equation}
u =  [\mathbf{A}(x)]^{-1}f(x)
\label{Eq:FE_model}
\end{equation}
where $u$ is the vector of nodal displacements, $\mathbf{A}$ is the stiffness matrix depending on $E$, and $f$ is the nodal vector of applied forces $q$. That FE model is represented as a function $h$, i.e. $h(x)= [\mathbf{A}(x)]^{-1}f(x)$. 
\paragraph{Prior {rs}}
 The prior {rs} of $x= [E, q]$ is expressed as:
\begin{itemize}
\item  the {rs} of the (dimensionless) stiffness $E$ is expressed using a probability box where the upper and lower bounds of its cdf are the lognormal distributions $L_1(m_{E} = 0.9, \sqrt{ v_{E} } = 0.1)$, $L_2(m_{E} = 1, \sqrt{v_{E}} = 0.11)$ respectively, where $m_{E}$ is the mean, and $v_{E}$ is the variance;
\item the randomness of the (dimensionless) applied force $q$ is expressed using mass belief function as: the possible events are $\mathcal{Q}_1 = [0.77, 0.92]$, $\mathcal{Q}_2 = [0.85, 0.98]$, $\mathcal{Q}_3 = [0.96, 1.08]$, and their masses are given as $M_Q(\mathcal{Q}_1) = 0.3$ and $M_Q(\mathcal{Q}_2) = 0.3$, $M_Q(\mathcal{Q}_3) = 0.4$.
\end{itemize}
We also assume that they are independent. The prior {rs} of $E$ can be encountered in practice as the bounds of prior cdf. While the prior {rs} of $q$ might result when collecting information from different sources in which information are expressed using intervals.

\textbf{Remark.} The prior description of the uncertainties of the parameters is  a mix of probability box and mass belief function. However, under the umbrella of a {rs}, their formulations are similar. That is one advantage when working with {rs}. 
   
\paragraph{Virtual measurement data} We set a vector $x_{t}$ of parameters as the truth and perform measurements \textit{virtually}, i.e. the equation (\ref{Eq:FE_model}) is solved with $x_t$ to obtain the displacement vector $u_t = h(x_t)$. 
The vector $u_t$ is then perturbed by adding the random errors $\epsilon$ which are modelled following Gaussian distributions, $\mathcal{N} (0, 1)$, and are assumed to be independent. The observation sets $\mathcal{Z}$ are derived to model the sensing resolution of measurement devices, which is assumed to be a unit in this example. The virtual measurement data of the displacements at the points a$_1$, \dots, a$_{11}$, see Fig.~\ref{Fig:bridge_truss}, are reported in Tab.~\ref{Tab:observation_data}. We consider two cases: (i) the inference is performed based on one measurement datum at points a$_1$, and (ii) the inference is performed using all the virtual measurement data. 
\begin{table} [th]

  \caption{Description of the \textit{virtual} measurements.}
   \begin{tabular}{ l | c | c | c | c | c }
     position   & a$_1$ & a$_2$ & a$_3$ & a$_4$ & a$_5$\\
        \hline
  true value  $u_t$&  -4.3959 &  -5.9547 &  -5.3349 &  -3.8462 &  -1.9231 \\
  $ \tilde{u} = u_t + \epsilon $ & -5.2414 &  -5.7764  &   -6.1868  & -2.9703 &  -3.8885 \\
  observation sets $\mathcal{Z}$ & [-6, -5] &  [-6, -5] &  [-7, -6]  & [-3, -2]  & [-4, -3]\\
  \end{tabular} 
~\\
~\\
~\\
   \begin{tabular}{ l | c | c | c | c | c |c }
	position   & a$_6$ & a$_7$ & a$_8$ & a$_9$ & a$_{10}$ & a$_{11}$\\
	\hline
	true value  $u_t$&  -2.5000 &-5.7488 &  5.7263 &  -4.6177 &  -2.8575 &  -0.8801 \\ 
	$ \tilde{u} = u_t + \epsilon $ & -0.7187 &-6.7999 &  -7.0940  &  -4.5517  & -2.0691 & 0.9171 \\ 
	observation sets $\mathcal{Z}$ & [-1, 0] & [-7, -6] &  [-8, -7] &  [-5, -4]  & [-3, -2] &  [0, 1]\\ 
\end{tabular}
  \label{Tab:observation_data}
\end{table}

\subsection{Numerical results}
\subsubsection*{Samples of the updated capacity transform pdf $\pi_T^a$}
The marginal capacity transform pdf of the prior {rs} of $E$, $\pi_{T_E}(x_1)$, can be evaluated as
\begin{equation}
\pi_{T_E}(x_1) \propto 
\bar{F}_E (x_1) - \munderbar{F}_E (x_1) 
\end{equation}
where $\bar{F}_E$ and $\munderbar{F}_E$ are its upper and lower cdf bounds of elastic modulus $E$. The marginal capacity transform pdf of the prior {rs} of $q$, $\pi_{T_2}(x_2)$, is evaluated as, 
\begin{equation}
\pi_{T_q}(x_2) \propto \sum_{i=1}^3 M_Q(\mathcal{Q}_i) \mathbf{1}_{\mathcal{Q}_i}(x_2).
\end{equation}
As $E$ and $q$ are independent, $\pi_{T}(x) = \pi_{T_E}(x_1) \pi_{T_q}(x_2)$.
The likelihood function, see Eq.~(\ref{Eq:likelihood}), in this case is simplified  as
\begin{equation}
\mathcal{L}(x) = \prod_{i=1}^{n_d} \bigr(F_{\epsilon} (u_i(x)-\munderbar{z}_i) -  F_{\epsilon} (u_i(x)-\bar{z}_i)\bigr),
\end{equation}
where $n_d$ is number of measurement data used, $F_{\epsilon}$ is the cdf of Gaussian distribution $\mathcal{N}(0,1)$, $u_i$ is the vertical displacement at the point a$_i$ computed using FE model in Eq.~(\ref{Eq:FE_model}), $\munderbar{z}_i = \min(\mathcal{Z}_i) $ and $\bar{z}_i = \max(\mathcal{Z}_i) $, where $\mathcal{Z}_i$ is the observed interval of the displacement at point a$_i$, see Tab. \ref{Tab:observation_data}. 

Using a MCMC simulation, the $\kappa$ samples $x^{(1)},\dots, x^{(\kappa)}$ of the posterior capacity transform pdf $\pi_T^a$ are obtained. From these samples, the pdf $\pi_T^a$ is estimated and illustrated in Fig.~\ref{Fig:capacitypdf}\if01 \footnote{Vinh note: change the axis label $E_h$ to $E$}\fi. It is observed that the updated capacity transform pdf converges to the truth parameter vector $x_t$ when more data are involved as expected. 
Note that at this step, the model responses $u^{(i)} = h (x^{(i)})$ where $i = 1, \dots, \kappa$ are also obtained. 

\begin{figure} [th]
     \centering
     \subfloat[Marginal capacity transfrom pdf of $E$ ]{\includegraphics[scale=0.45]{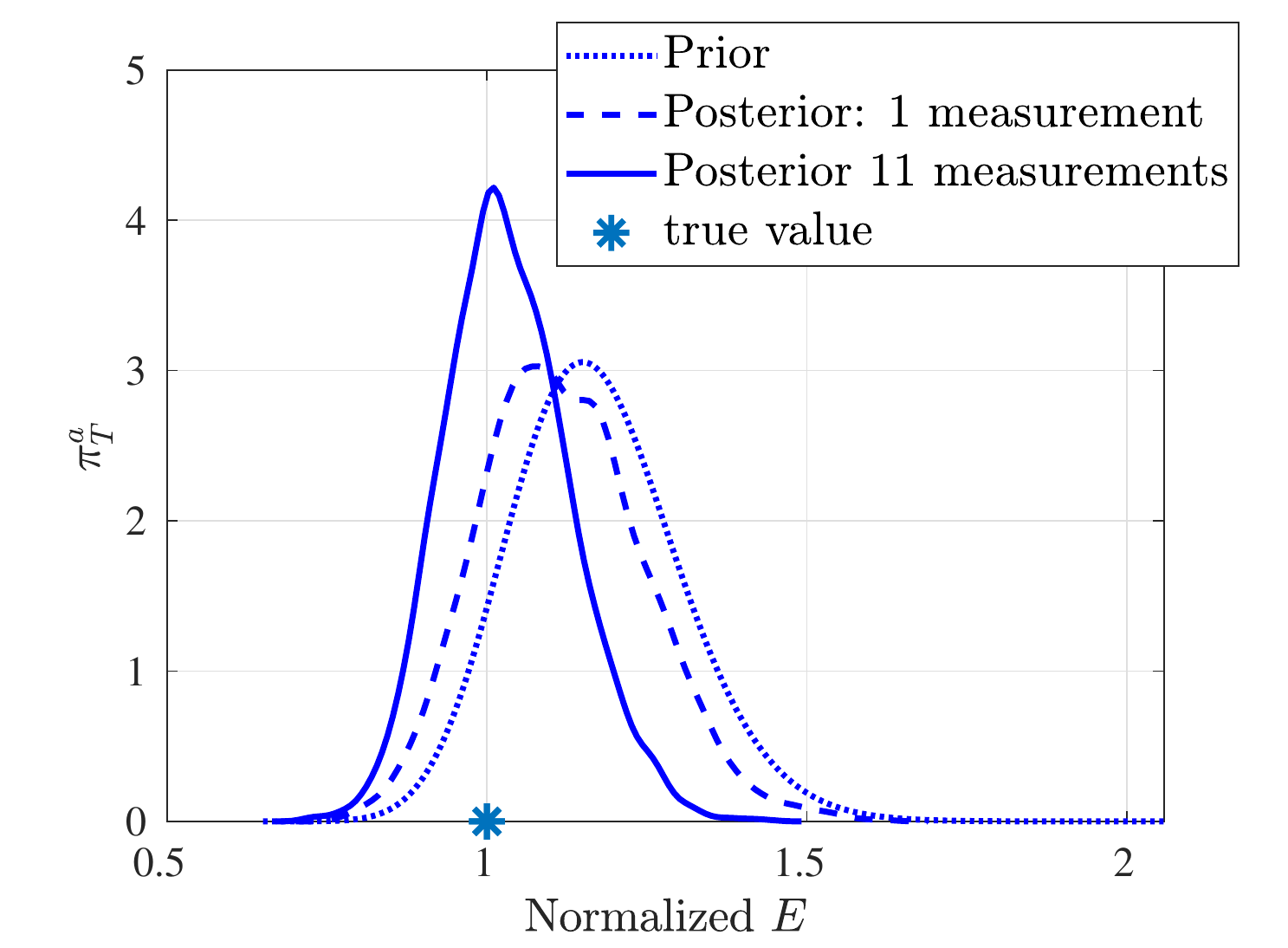}}

     \subfloat[Marginal capacity transfrom pdf of $q$ ]{\includegraphics[scale=0.45]{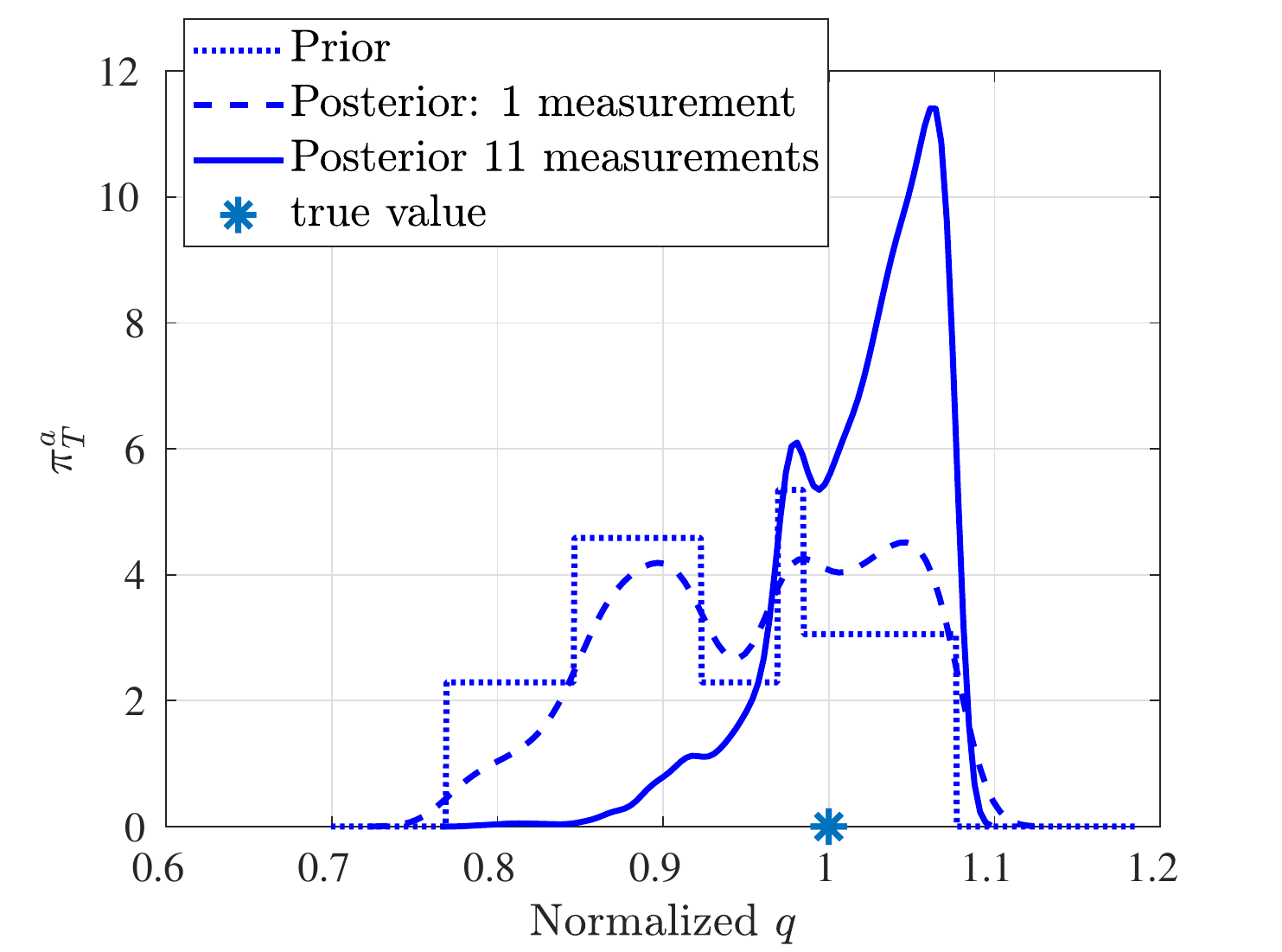}}
     \caption{Updated capacity transform pdf using Bayesian inference. }
     \label{Fig:capacitypdf}
\end{figure}
\subsubsection*{Samples of the discrete random set $\hat{X}^{a,\kappa}$}
Following the approximation method developed in Section.~\ref{Sec:approximation_of_posterior_randomset}, instead of directly sampling the posterior {rs} ${X}^a$ defined in Definition \ref{Def:Dempster's_rule}, we sample its approximation, i.e. the random discrete set $\hat{X}^{a, \kappa}$. 
Using sampling values of $x^{(i)}$ and $u^{(i)} = h(x^{(i)})$ where $i = 1, \dots,\kappa$ from MCMC simulation, the $N$ set-valued samples ${X}^{a,\kappa \; (1)}, \cdots, {X}^{a,\kappa \;(N)}$ of the random discrete set $\hat{X}^{a, \kappa}$ are obtained following the Algorithm~\ref{algorith:approximate_random_set}. No further evaluation of the FE model is required at this step. 

From the samples  ${X}^{a,\kappa\; (1)}, \cdots, {X}^{a,\kappa \;(N)}$, the upper and lower cdf bounds, i.e. $P_{{X}^a} ((-\infty, x))$ and $T_{{X}^a} ((-\infty, x))$ respectively, are evaluated using Eq.~(\ref{Eq:estimate_capacity}). The obtained marginal cdf bounds of the posterior {rs} of $E$ are shown in Fig.~(\ref{Fig:cdf_E}). As it is observed in Fig.~\ref{Fig:cdf_E}, the bounds of cdfs are shrinked after updating. This is explained by the fact that ${X}^a(\omega)$ is a subset of ${X}(\omega)$, see Eq.~(\ref{Eq:Dempster's_rule}). The more data become available, the thinner these bounds. Futhermore, the updated random sets get closer to the truth value similarly with the Bayesian inference of {rv}. 
\begin{figure} [th]
     \centering

     \subfloat[Marginal cdf of $E$ (one measurement) ]{
     \includegraphics[scale=0.43]{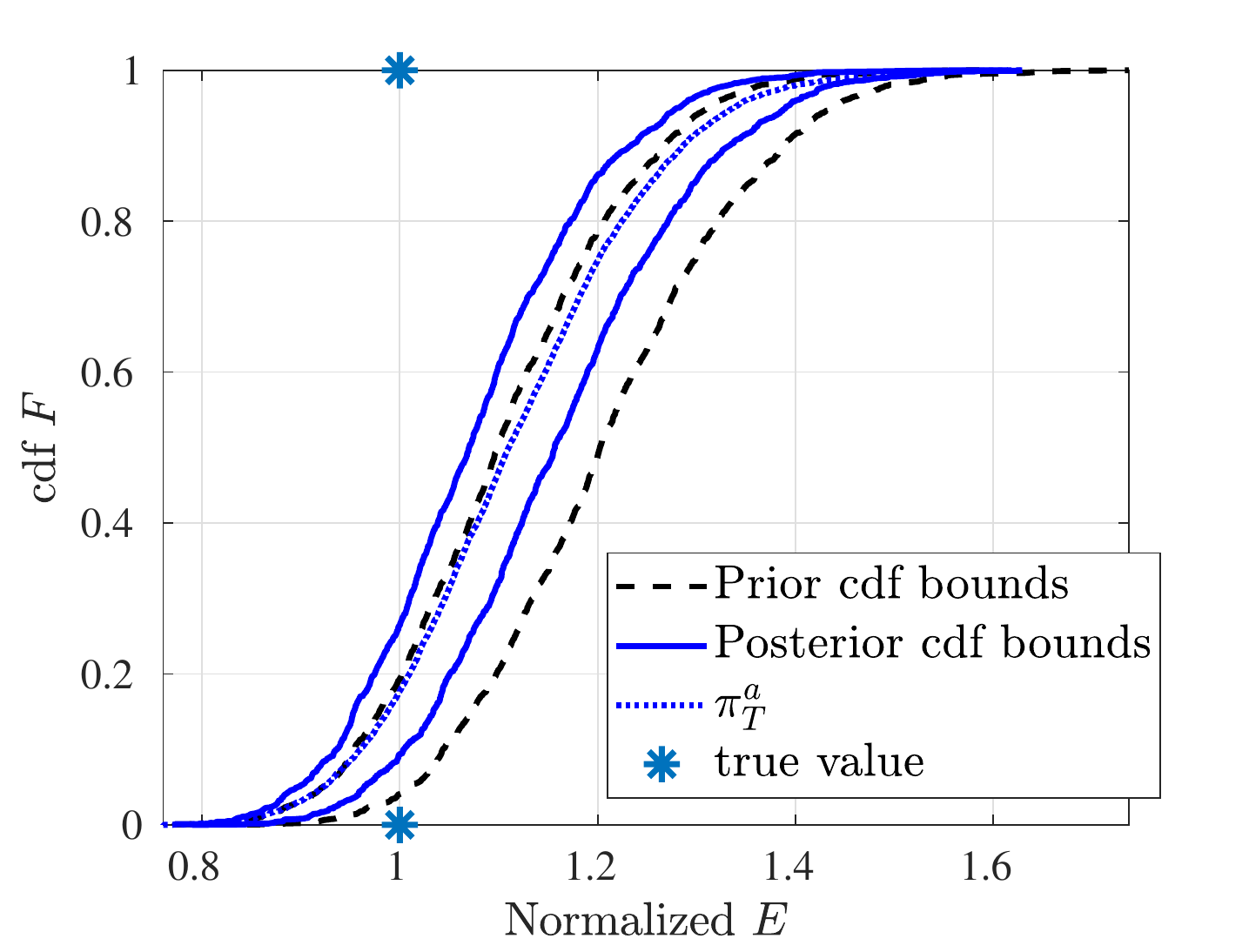}}
     \subfloat[Marginal cdf of $E$ (11 measurements) ]{
     \includegraphics[scale=0.43]{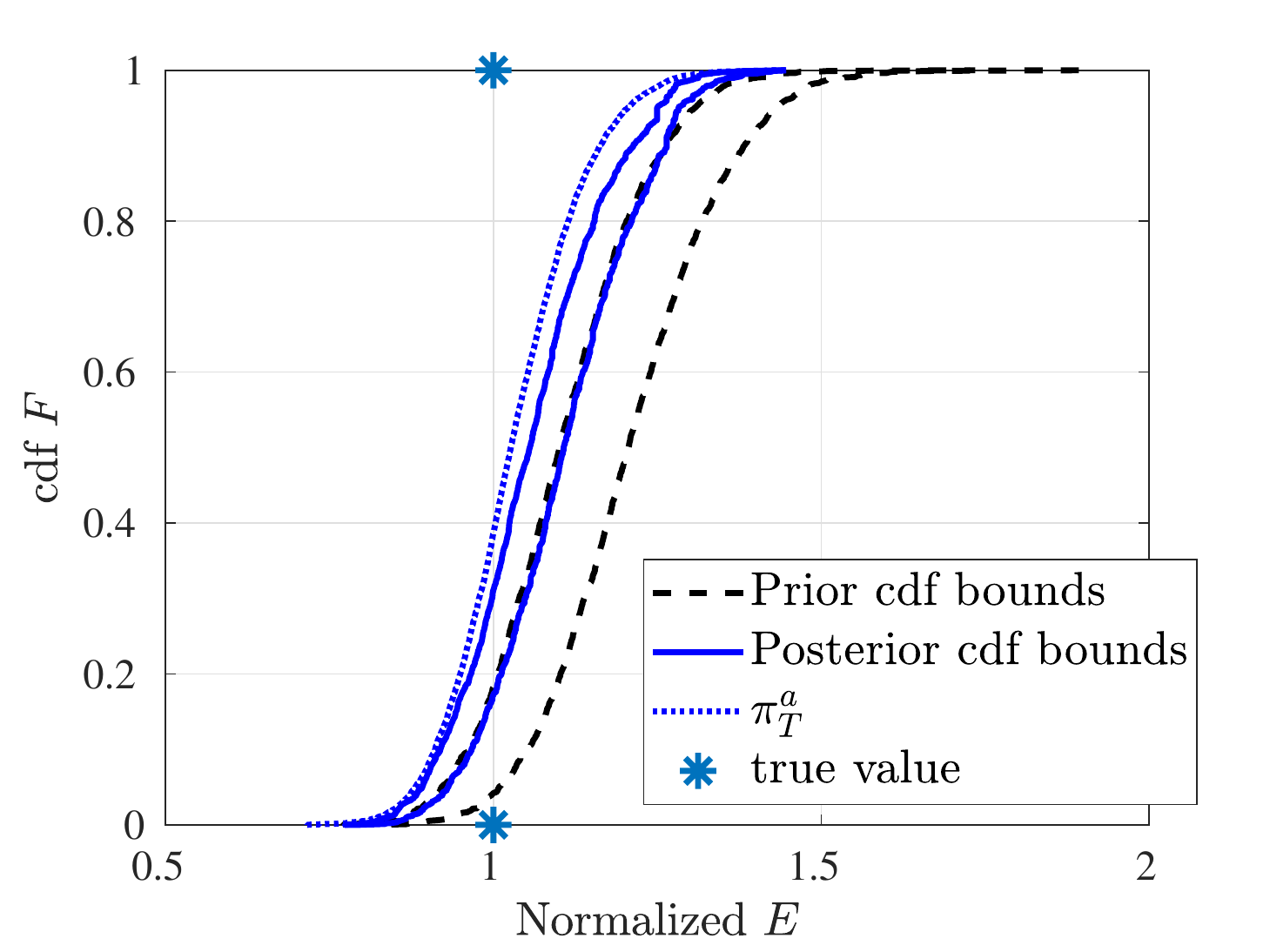}}
     
      \subfloat[Marginal cdf of $q$ (one measurement) ]{
      \includegraphics[scale=0.43]{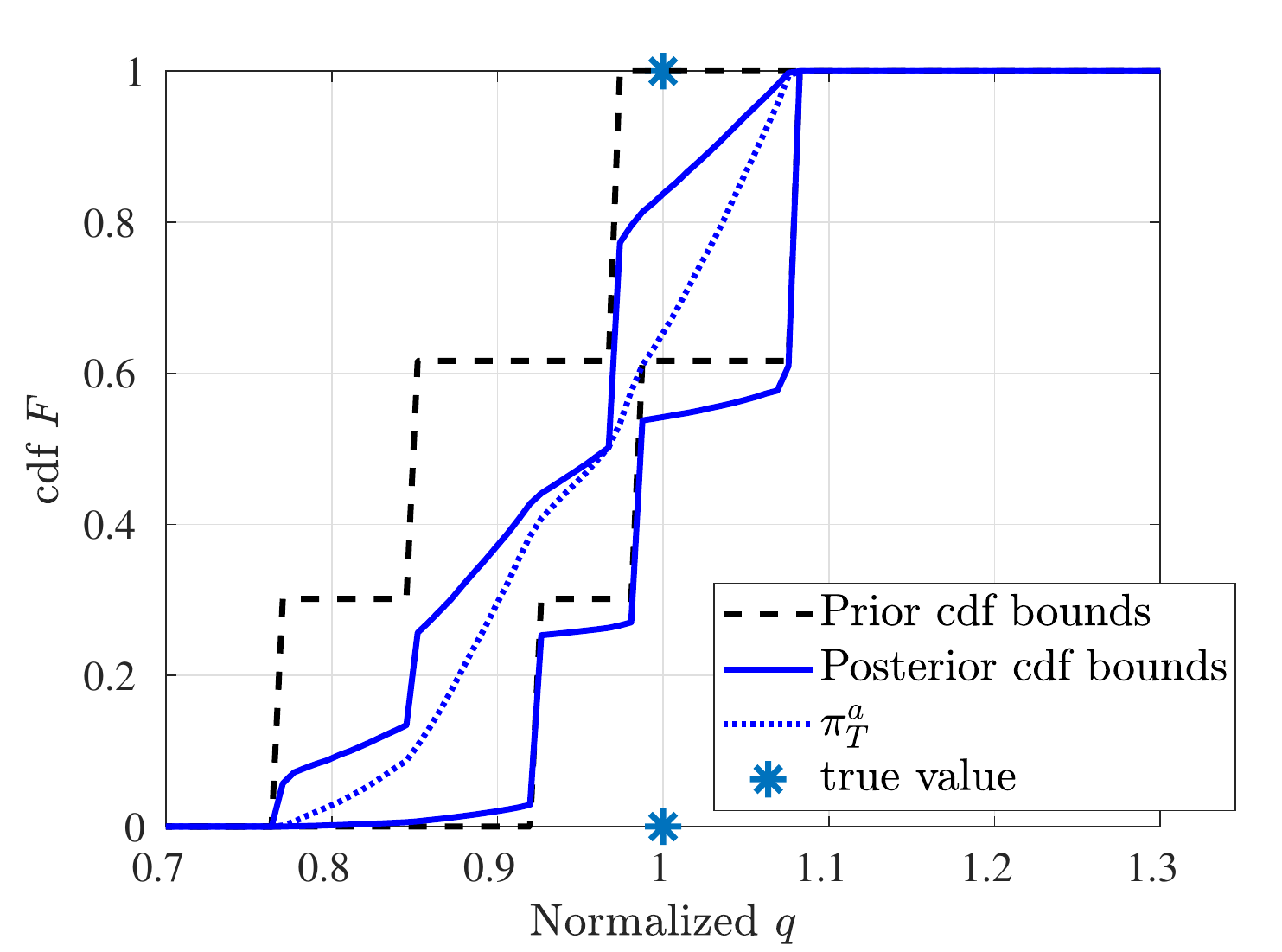}}
      \subfloat[Marginal cdf of $q$ (11 measurements) ]{
      \includegraphics[scale=0.43]{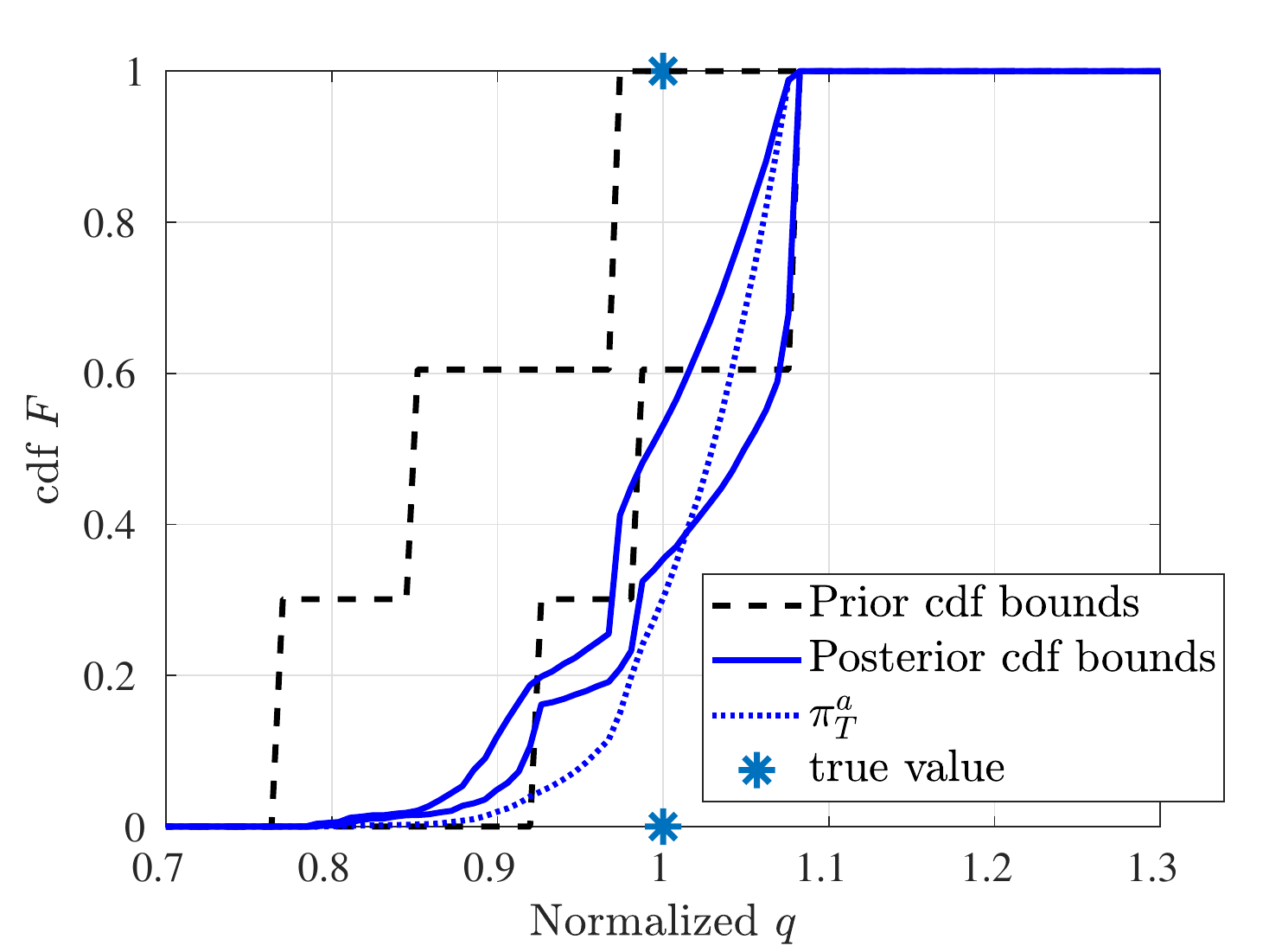}}
     
     \caption{Bounds of the cumulative distribution function (cdf) of the updated {rs}. First line: marginal cdf of $E$, second line: marginal cdf of $q$. First row: only one measurement datum (at a$_1$) is used, second row: all eleven measurement datum are used.}
     \label{Fig:cdf_E}
\end{figure}
 
 \begin{figure} [th]
 	\centering
 	\includegraphics[scale=0.550]{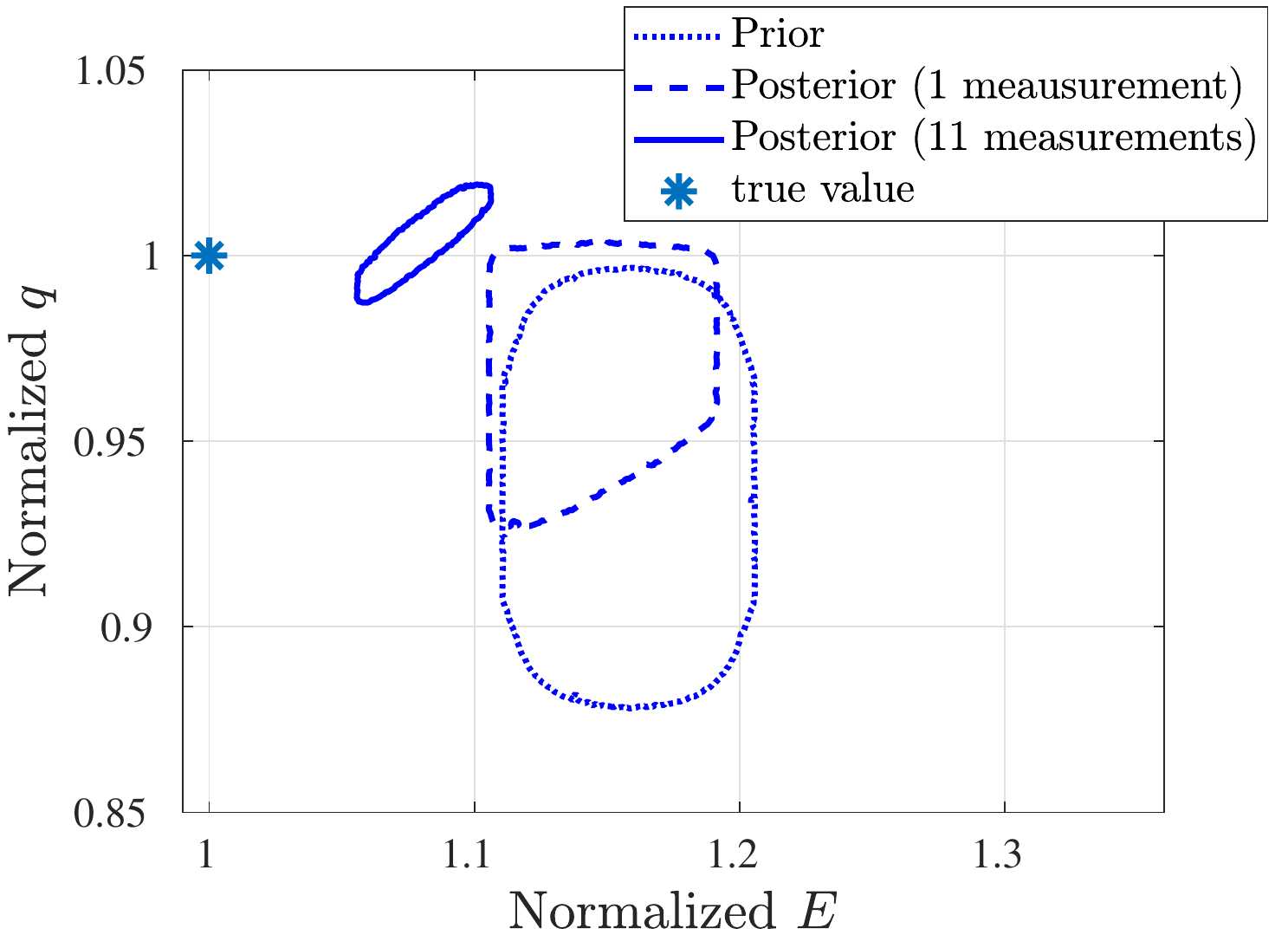}
 	\caption{The boundaries of the selection expectations of prior and posterior random sets obtained in two cases: using one measurement datum (at a$_1$), and using all eleven measurement data.}
 	\label{Fig:convex_hull_expectation}
 \end{figure}

Based on the samples ${X}^{a,\kappa\; (1)}, \cdots, {X}^{a,\kappa \;(N)}$, we compute the boundary of the selection expectation set $\mathbb{E}_S({X}^a)$ defined in Eq.~(\ref{Eq:slection_expectation}). This task requires the evaluation of the support functions  $\gamma(\mathbb{E}_S({X}^a),\nu)$. In this example, we use a non-atomic elementary probability space. Indeed, while a non-atomic elementary probability space can model both the prior random sets of $E$ and $q$, an atomic one can only model the {rs} of $q$. Following the Theorem~\ref{Prop:convex_selection_expectation}, the selection expectation set $\mathbb{E}_S({X}^a)$ is convex, and the support functions $\gamma(\mathbb{E}_S({X}^a),\nu)$ can be computed via the probabilistic expectation of {rv} $\gamma({X}^a (\omega),\nu)$ following Eq.~(\ref{Eq:support_function_slection_expectation}).  From the samples $\hat{X}^{a,\kappa \; (1)}, \cdots, \hat{X}^{a, \kappa \;(N)}$, the approximation of the expectation $\mathbb{E}(\gamma({X}^a (\omega),\nu))$ is evaluated using Eq.~(\ref{Eq:approximate_expectation_set}). The selection expectation boundaries belonging to the prior and the posterior random sets are illustrated in Fig.~\ref{Fig:convex_hull_expectation}. It is observed that, when more measurement data become available, the selection expectation moves toward the true value. It also shrinks in an agreement with the shrinking of the cdf bounds in Fig.~\ref{Fig:cdf_E}. 

\subsubsection*{Convergence analysis} To investigate the convergence of the approximation expressed in Eq.~(\ref{Eq:intersection_MC_approximate}), one can check the mean square error when approximating the expectation of the support function $\mathbb{E} (\gamma({X}^a, \nu))$ by $\mathbb{E} (\gamma(\hat{X}^{a, \kappa}), \nu)$, see Eq.~(\ref{Eq:approximate_expectation_set}). That mean square error is defined as
\begin{equation}
MSE_\nu (\kappa)= \Bigl( \int_{\mathbb{X}^\kappa}
\bigr[\mathbb{E} (\gamma({X}^a, \nu)) - \mathbb{E} (\gamma(\hat{X}^{a, \kappa}, \nu ) ) \bigr]^2
\prod_{i=1}^{\kappa}\pi_T^a(x^{(i)}) dx^{(i)}\Bigl)^{1/2}
\end{equation}
Because $\mathbb{E} (\gamma({X}^a, \nu))$ is unknown $MSE_\nu$ is approximated as
\begin{equation}
MSE_\nu (\kappa) \approx \Bigl( \int_{\mathbb{X}^\kappa}
\bigr[ \mathbb{E} (\gamma(\hat{X}^{a, \kappa_\infty}, \nu)) - \mathbb{E} (\gamma(\hat{X}^{a, \kappa}, \nu ) ) \bigr]^2
\prod_{i=1}^{\kappa}\pi_T^a(x^{(i)}) dx^{(i)} \Bigl)^{1/2},
\end{equation}
where $\kappa_\infty \gg \kappa$. The integration of the mean square error $MSE_\nu$ can be computed using MC method. 

The normalized mean square errors $MSE_\nu$ of different values $\nu$ are illustrated in Fig.~\ref{Fig:convergen_analysis}. It can be observed that with $\kappa = 2000$ samples $x^{(i)}$ of the pdf $\pi_T^a$, the normalized mean square error $MSE_\nu$ is smaller than 3\%. 
 \begin{figure} [th!]
 	\centering
 	\includegraphics[scale=0.55]{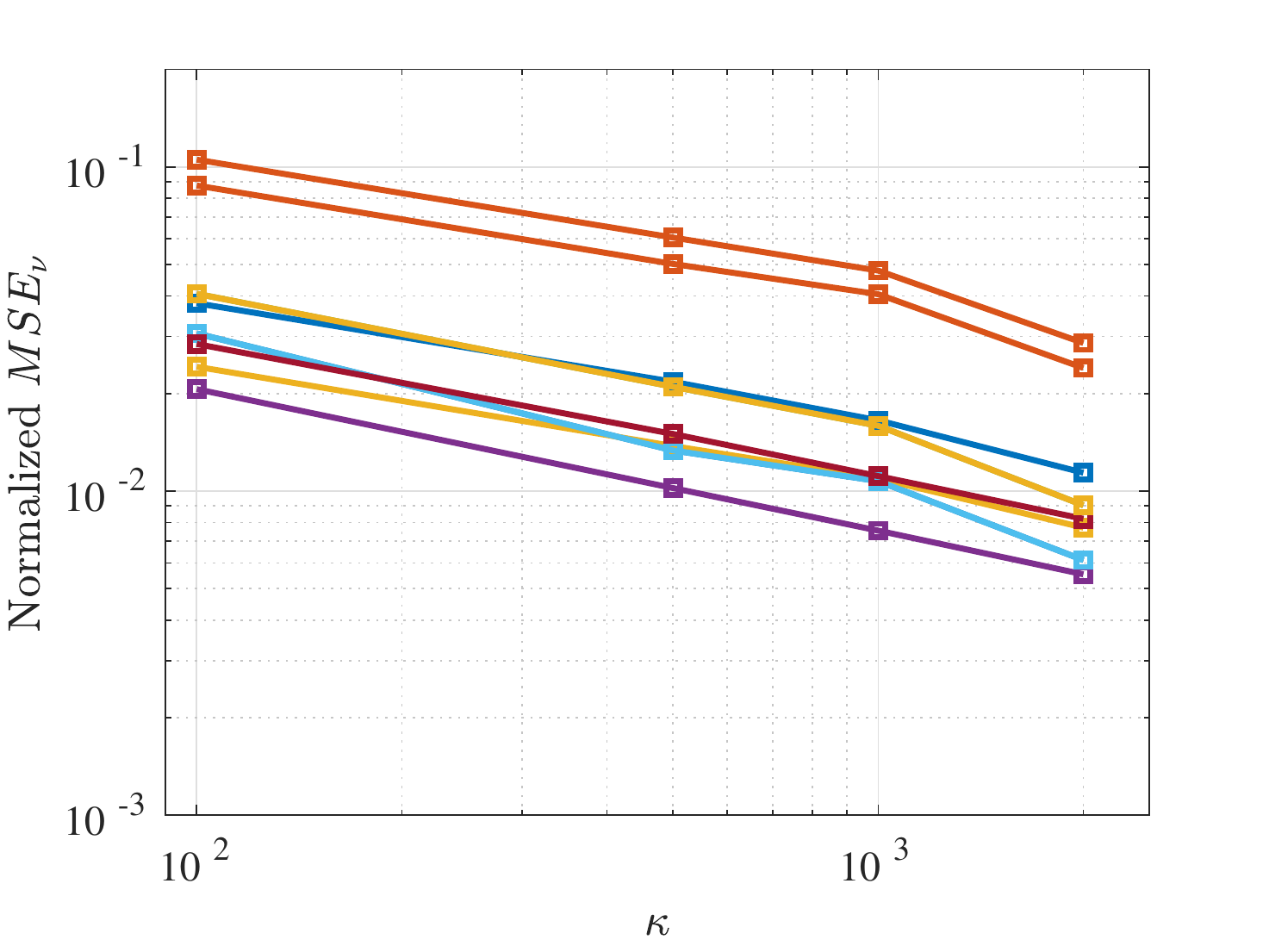}
 	\caption{The normalized mean square error $MSE_\nu$ for different values of $\nu$ (the illustrated case is when all data are used for the inference).}
 	\label{Fig:convergen_analysis}
 \end{figure}
\section{Conclusion}
\label{Sec:conclusion}
In this work, we develop a framework for the non-deterministic inference using random set models. 
The inference rule is based on Dempster's rule of combination. We show that the proposed reference is a generalisation of the Bayesian one. The posterior {rs} is approximated using random discrete set whose the domain is the set of samples of a proposed distribution. The capacity transform pdf of the posterior {rs} is chosen as the proposed distribution. With this choice, the required samples of the proposed distribution can be obtained using the methods developed in Bayesian framework, e.g. MCMC. The computational burden is hence comparable with those methods in Bayesian inference. 

A {rs} can equivalently formulate other uncertainty modelling methods, e.g. {rv}, set of possible values, mass belief function in the evidence theory, and probability box. Therefore, the developed inference framework can be applied for all these cases as well as their combinations. We have demonstrated this advantage in a numerical example in Section~\ref{Sec:numerical_example}.

Since the computation method for the set-valued selection expectation has been developed, the framework can be extended toward decision making theory. 
In addition, the special property of the capacity transform pdf is promising when dealing with data assimilation involving random sets.

\appendix

\section{Dempster's combination rule}\label{appendix:Dempster_rule}
Let $M$ and $M^d$ be two mass belief functions, see Section~\ref{Sec:DS_theory} for their definition. The Dempster's rule to combine the two belief mass functions $M$ and $M^d$ is given as  
\begin{equation}
M^{a} (\mathcal{X}) = \dfrac{1}{1-K} \quad \sum_{\mathcal{X}_i \in \mathfrak{X}_M,\; \mathcal{X}_j \in \mathfrak{X}_{M^d}} M(\mathcal{X}_{i}) M^d(\mathcal{X}_{j})\; \mathbf{1}({\mathcal{X}_{i}\cap \mathcal{X}_{j} = \mathcal{X}})
\end{equation}  
where $\mathcal{X} \neq \emptyset$, and $K$ is a measure of the amount of the conflict between $M$ and $M^d$
$$ K =  \sum_{\mathcal{X}_i \in \mathfrak{X}_M,\; \mathcal{X}_j \in \mathfrak{X}_{M^d}} M(\mathcal{X}_{i}) M^d(\mathcal{X}_{j})\mathbf{1}(\mathcal{X}_i \cap \mathcal{X}_j = \emptyset)$$

In Dempster's combination rule, a non empty set $\mathcal{X}$ has a positive belief mass after updated, i.e $M^a(\mathcal{X})> 0$, only if there exist a pair $\mathcal{X}_{i}\in \mathfrak{X}_M$, $\mathcal{X}_{j}\in \mathfrak{X}_{M^d}$ such that 
 $\mathcal{X}_{i}\cap \mathcal{X}_{j} = \mathcal{X} $. 
The inference methods described in Definition~\ref{Def:Dempster's_rule} is equivalent to Dempster's combination rule when the random sets ${X}$ and ${X}^d$ are resulted from the mass belief functions. 
\section{Comparison of two inference strategies of {rs} on a simple example}
\label{appendix:roburst_bayesian_vs_Dempster}
In this section we compare the two inference strategies: (i) using Dempster's rule as discussed in Section~\ref{Sec:inversion_random_set} and (ii) using Bayesian interference on the set of selection random variables. 
Let $\varOmega = \{\omega_1, \omega_2\},\;  \mathbb{P}(\omega_1) + \mathbb{P} (\omega_2)=1 $, $\mathfrak{X} = \{\mathcal{X}_1, \mathcal{X}_2\}$. 
The {rs} represents the prior knowledge about the variable $x$ is given by
 \begin{equation} 
 {X}(\omega_1) = \mathcal{X}_1, \quad  {X}(\omega_2) = \mathcal{X}_2.
 \end{equation}
Assuming that we have a direct measurement about $x$ which give us the following information
$x \in \{x_1, x_2\}$ where $x_1 \in  \mathcal{X}_1 \backslash \mathcal{X}_2 $ and $x_2 \in  \mathcal{X}_2 \backslash\mathcal{X}_1 $. 
The {rs} induced by this measurement is 
$$ {X}^d (\omega)= \{x_1, x_2\} \quad \forall \omega \in \varOmega.$$ 
\subsection{Inference using Dempster's rule}
Using Dempster's rule, the updated posterior {rs} is obtained as
\begin{equation}
{X}^{a}(\omega_1) = x_1, \;{X}^a(\omega_2) = x_2.
\end{equation}
In other words, the sets $\mathcal{X}_1$ and $\mathcal{X}_2$ shrink to become $x_1$ and $x_2$ respectively. 
The {rs} ${X}^{a}$ is a {rv}, and its distribution function is given as
\begin{equation}
P_{{X}^{a}}(x_1) = \mathbb{P}(\omega_1), \;P_{{X}^{a}}(x_2) = \mathbb{P}(\omega_2).
\label{Eq:Dempster_rule_examples}
\end{equation}
\subsection{Inference using Bayes' rule on the set of selection random variables}
Let $P_x$ be a probability distribution of a selection {rv} $x(\omega)$. 
Using Bayes's rule, the updated distribution $P_{x^a}$ of the prior distribution $P_x$  is given as
 \begin{equation}
 P_{x^{a}}(x_1) = \dfrac{P_x(x_1) } {P_x(x_1) + P_x(x_2)}, \;P_{x^a}(x_2) = \dfrac{P_x(x_2) } {P_x(x_1) + P_x(x_2)} \quad \text{if} \; P_x(x_1) + P_x(x_2) > 0.
 \end{equation}
In cases that $P_x(x_1) + P_x(x_2) = 0$, no update is possible. These are two special cases, 
 \begin{equation}
 P_{x^a}(x_1) = 1,\; P_{x^a}(x_2) = 0, \quad \text{if} \; P_x(x_1) >0,\; P_x(x_2) = 0,
 \end{equation}
and
\begin{equation}
P_{x^a}(x_1) = 0, \; P_{x^a}(x_2) = 1, \quad \text{if} \; P_x(x_1) =0, \; P_x(x_2) > 0.
\label{Eq:Bayes_rule_examples}
\end{equation}
Based on that update, the posterior knowledge is represented as 
 \begin{equation}
 0 \leq   P_{x^a}(x_1), P_{x^a} (x_2) \leq 1,\quad \text{and} \; P_{x^a}(x \notin \{x_1, x_2\}) =0.
 \end{equation}

This conclusion is independent on the prior probability $P_{{X}}(\mathcal{X}_1), P_{{X}}(\mathcal{X}_2)$ and is therefore less informative compared to the one given by Dempster's combination rule described in Eq.~(\ref{Eq:Dempster_rule_examples}). That problem is due to the fact that the selection random variables are updated independently without any interaction to each other.
While in the inference using Dempster's rule, a {rs} is considered as a single information. 
 
\section{Proof of Theorem \ref{lemma:bayes_dempster_rules}}\label{appendix:Proof_bayes_dempster_rules}
The pdf $\pi{^a}$ of {rv} $x^a(\omega)$ is given by
 \begin{equation}
  \pi{^a}(x) := \int_{\varOmega} \delta (x - x^{a}(\omega))  \mathbb{P}^a(\dint \omega)
 \end{equation}
 where $\delta$ is Dirac delta function.
Insert $\mathbb{P}^a$ in Eq.~(\ref{Eq:DS_rule_to_Bayes}) to the expression of $\pi^a$ yields
 \begin{equation}
  \pi{^a} (x) = \dfrac{1}{C} \int_{\varOmega} \delta (x - x(\omega))  \; \mathbf{1}_{{X}^d(\omega)}(x)  \mathbb{P}(\dint \omega).
 \end{equation}
 where $C$ is a constant given by
 $$ C = \int_{\varOmega} \mathbf{1}_{{X}^d(\omega)}(x(\omega))  \mathbb{P}(\dint \omega) $$
As $x(\omega)$ and ${X}^d(\omega)$ are independent we have
\begin{equation}
\pi{^a} (x) = \dfrac{1}{C}  \int_{\varOmega} \delta (x- x(\omega)) \mathbb{P}(\dint\omega) \int_{\varOmega}  \mathbf{1}_{{X}^d(\omega)}(x) \mathbb{P}(\dint \omega)
= \dfrac{1}{C} \pi(x) \mathcal{L}(x),
\label{Eq:Bayes_rule_from_Dempster_rule_appendix}
\end{equation}
where $\pi(x)$ is the pdf of r.v. $x(\omega)$, and 
 $$ C = \int_\mathbb{X} \int_{\varOmega} \mathbf{1}_{{X}^d(\omega)}(x)   \mathbb{P}(\dint \omega) \pi (x)  \dint x = \int_\mathbb{X}  \mathcal{L}(x) \pi(x) \dint x.$$ 
The equation (\ref{Eq:Bayes_rule_from_Dempster_rule_appendix}) expresses the Bayesian inference. 

\section{Proof of Theorem \ref{TH:condtion_of_proposed_pdf}} \label{appendix:condtion_of_proposed_pdf}
The Hausdorff distance betwen $\mathcal {X}$ and $\hat{\mathcal {X}}$  is defined as
\begin{equation}
d_{{{\mathrm  H}}}(\mathcal {X},\hat{\mathcal {X}}) = 
\max\{
\,\sup_{ x\in \mathcal {X} }\inf_{ x'\in \hat{\mathcal{X}} }d(x,x'),
\,\sup_{ x'\in \hat{\mathcal {X}} }\inf_{ x\in \mathcal{X} }d(x,x')\, 
\},
\end{equation}
where $d(x, x')$ is the Euclid distance between two points $x$ and $x'$. 
Since $\hat{\mathcal {X}} \subset  {\mathcal {X}}$, $d_{{{\mathrm  H}}}(\mathcal {X},\hat{\mathcal {X}})$, is simplified as
\begin{equation}
d_{{{\mathrm  H}}}(\mathcal {X},\hat{\mathcal {X}})=\sup _{{x\in \mathcal {X}}}\inf _{{x'\in \hat{\mathcal {X}}}}d(x,x').
\end{equation}
Let $B(x,r)$ be a ball centred at $x \in \mathcal {X}$ and radius $r>0$.
The probability of the event $\hat{\mathcal {X}} \cap  \; (B(x,r) \cap \mathcal {X}) =\emptyset$, i.e. $x^{(i)} \notin B(x,r) \cap \mathcal {X}$ $\forall \; i \in \{1, \dots,\kappa\} $,  is 
\begin{equation}
P_{\pi^e} (\hat{\mathcal {X}} \cap  \; (B(x,r) \cap \mathcal {X}) =\emptyset) = \bigr(1- \int_{B(x,r) \cap \mathcal {X} }\pi^{e}(x)dx\bigr) ^\kappa.
\end{equation}
Since
$\pi^{e}(x)dx > 0$ $\forall x\in \mathcal {X}$ and $\mathcal {X} =\text{cl}(\text{int}(\mathcal{X}))$, we have $\int_{B(x,r) \cap \hat{\mathcal{X}}}\pi^{e}(x)dx>0$ and 
$$
\lim_{\kappa \rightarrow \infty}\bigr(1- \int_{B(x,r)\cap \mathcal {X}}\pi^{e}(x)dx\bigr) ^ \kappa \rightarrow 0, \; \forall x\in \mathcal {X}.
$$
For all $r > 0$, there exists $n_b< \infty$ points $x_{(i)}$ in $\mathcal{X}$ such that $\mathcal{X} \subset \cup_{i=1}^{n_b} B(x_{(i)}, r)$. The probability that the Hausdorff distance between the set $\mathcal {X}$ and the set $\hat{\mathcal {X}}$ is larger than $2r$ converges to zero as $\kappa \rightarrow \infty$, i.e. 
\begin{equation}
P_{\pi^e} (d_H( \mathcal {X}, \hat{\mathcal {X}}) \geq 2r) \leq \sum_{i = 1}^{n_b} \bigr(1- \int_{B(x_{(i)},r) \cap \mathcal {X}}\pi^{e}(x)dx\bigr) ^ \kappa \; dx  \rightarrow 0, \; \forall r>0.
\end{equation}
In other words, the probability $P_{\pi^e}$ of the event $\lim_{\kappa \rightarrow \infty}(d_H(  \mathcal{X}, \hat{\mathcal{X}})) = 0$ is one.

\section*{Acknowledgement}
Funded by the Deutsche Forschungsgemeinschaft (DFG) through the SPP-1886.

\bibliographystyle{ieeetr}  
\bibliography{library}
\end{document}